\newtheorem{propo}{Proposition}[section]
\newtheorem{lemma}[propo]{Lemma}
\newtheorem{corol}[propo]{Corollary}
\newtheorem{theo}[propo]{Theorem}
\newcommand{\ld}{,\ldots ,}
\newcommand{\ra}{ \rightarrow }
\newcommand{\lan}{ \langle }
\newcommand{\ran}{ \rangle }
\newcommand{\diag}{\mathop{\rm diag}\nolimits}
\newcommand{\Id}{\mathop{\rm Id}\nolimits}
\newcommand{\Irr}{\mathop{\rm Irr}\nolimits}
\newcommand{\al}{\alpha}
\newcommand{\lam}{\lambda }
\newcommand{\up}{^{-1}}
\newcommand{\el}{\end{lemma}}
\newcommand{\om}{\omega }
\def\d12{{_{12}}}
\def\acf{{algebraically closed field }}
\def\f{{following }}
\def\ii{{if and only if }}
\def\ir{{irreducible }}
\def\irt{{irreducible. }}
\def\irr{{irreducible representation }}
\def\itf{{It follows that }}
\def\mult{{multiplicity }}
\def\rep{{representation }}
\def\reps{{representations }}
\def\syl{{Sylow $p$-subgroup }}
\date{} 
\begin{document}

\title{Irreducible representations of simple algebraic groups in which a unipotent element is 
represented by a matrix with single non-trivial Jordan block}
%Almost regular unipotent elements in representations of algebraic groupss}
\author{Donna Testerman and A.E. Zalesski\footnote{Acknowledgement. A part of this work was carried out with the generous support of the Bernoulli Center, at the 
Swiss Federal Institute of Technology Lausanne,
 when the second author participated in the workshop ``Local representation theory and simple groups'' (2016).}}
\maketitle

%file: DZ-part-3-2016-12-10

 \section{Introduction}

The \rep theory of algebraic groups is based on the study of weight spaces, which are
 nothing other than the
 homogeneous components  
with respect to  a maximal torus. As every semisimple element belongs to a maximal torus, 
the knowledge of 
weights and their multiplicities, 
in a given representation, yields rich information on eigenspaces of the element under consideration. 

It is much harder to obtain information on properties of unipotent elements, for example, 
their fixed point space,
their minimal polynomial, or in the best case, their 
Jordan block structure in a given representation. The situation is better for certain classes of 
elements,
 such as root elements, 
but in general, problems of this kind can be difficult.    

One such question was raised several years ago by the second  author, specifically:

Determine the \ir \reps $\phi$ of a simple algebraic group $G$ such that, for some unipotent element $u$, 
the Jordan normal form of $\phi(u)$ has exactly one  block of size greater than 1?

The main motivation for considering this question is to supply an additional tool for  the  recognition 
of linear groups via  properties of a single element. However, one can also view this question as a test of 
how well the general theory is adapted for solving computational problems on unipotent elements.

%Surprizingly, this question  turns out to be not easy. 
The first contribution was made by I. Suprunenko in 
\cite[Theorem 1.9]{S95}, who solved the problem in the case where $\phi(u)$ has exactly one Jordan block. 
Later she obtained a solution to the above problem for classical groups \cite[Theorem 3]{S2} 
(see \cite{S} for the proof). The current manuscript grew out of our work on overgroups of regular elements in simple algebraic groups (see \cite{TZ1,TZ2}). At that time, Suprunenko 
had announced a result
which can be used for solving  the above question for elements of order $p$ in the exceptional groups other than $G_2$; see Remark after Theorem~\ref{id5} for details. 
We have 
recently learned
that David Craven is working on similar questions for finite simple groups and their automorphism groups.

Our main result answers the above  question by considering all unipotent elements in 
all simple algebraic groups
of  exceptional type.  

\begin{theo}\label{mth1} Let G be  a simply connected simple linear algebraic group of exceptional Lie type over 
an algebraically closed field $F$
of characteristic $p\geq 0$, and let $u\in G$ be a nonidentity unipotent element. Let $\phi$ be a non-trivial 
irreducible representation of G. % with highest weight $\om$.
 Then the Jordan normal form of $\phi(u)$ contains at most one non-trivial block if and only if
 $G$ is of type $G_2$, $u$ is a regular unipotent element and $\dim \phi\leq 7$. 

\end{theo}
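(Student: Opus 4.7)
My plan separates the two directions, using throughout the criterion that for a unipotent matrix $A$ the number of non-trivial Jordan blocks equals $\operatorname{rank}(A-I) - \operatorname{rank}((A-I)^2)$, which reduces the statement to controlling these two ranks on $\phi$. For the ``if'' direction, the only non-trivial irreducible representations of $G_2$ of dimension at most $7$ come from the module $L(\omega_1)$ (seven-dimensional when $\operatorname{char} F \ne 2$, with a six-dimensional simple quotient when $\operatorname{char} F = 2$); a direct $\alpha$-string analysis of the weights $\{\pm\varepsilon_i, 0\}$, or equivalently the Jacobson--Morozov $sl_2$-theory applied to the regular unipotent in good characteristic, shows that $\phi(u)$ is a single Jordan block in each case.

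For the ``only if'' direction with $G = G_2$, there are three non-regular unipotent classes ($A_1$, $\widetilde{A}_1$, $G_2(a_1)$), and I would verify directly on the fundamental modules $L(\omega_1)$ and $L(\omega_2)$ that each produces at least two non-trivial Jordan blocks, then extend to arbitrary non-trivial irreducibles via Steinberg's tensor product theorem together with the combinatorics of Jordan types under tensor product. For $u$ regular with $\dim\phi > 7$, I would use the cocharacter $\tau = 2\rho^\vee$ in good characteristic to read off Jordan block sizes from the $\tau$-weight multiplicities on $V(\lambda)$ and observe that these never form a single two-step arithmetic progression of multiplicity one when $\dim L(\lambda) > 7$, with bad characteristics handled by direct computation.

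For $G$ of type $F_4, E_6, E_7, E_8$, the strategy is a Bala--Carter-style reduction: every unipotent $u$ is distinguished in some Levi subgroup $L \subseteq G$. If $L$ is classical, Suprunenko's theorem \cite{S2} applied to the composition factors of $\phi|_L$ already forces at least two non-trivial blocks. If $L$ is of smaller exceptional type, induction on rank via Theorem~\ref{mth1} applies. If $L = G$, so $u$ is distinguished in $G$ (in particular regular), I would compute the Jordan structure on the small fundamental modules ($26$ for $F_4$, $27$ for $E_6$, $56$ for $E_7$, $248$ for $E_8$) via $\tau$-weight multiplicities, and extend to arbitrary $\lambda$ through dimension bounds forcing multiple weight strings once $\dim L(\lambda)$ is large enough.

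The main obstacle, I expect, is controlling the Levi-reduction step when $\phi|_L$ has several composition factors: extensions between them can in principle merge Jordan blocks of $\phi(u)$, collapsing several non-trivial blocks visible at the composition-factor level into a single block on $\phi$. To avoid this I would compute the rank difference above directly on the full $V(\lambda)$ using the $\tau$-weight grading, which is insensitive to how the composition factors are glued. A secondary obstacle is bad characteristic ($p = 2, 3$ for all relevant types, and additionally $p = 5$ for $E_8$), where the $sl_2$-theory may fail and a class-by-class analysis will be needed, relying on the unipotent class data of Mizuno and Spaltenstein together with L\"ubeck's tables of small irreducible representations.
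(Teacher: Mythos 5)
The decisive gap is the device you rely on to handle arbitrary highest weights: computing $\mathrm{rank}(\phi(u)-1)-\mathrm{rank}\bigl((\phi(u)-1)^2\bigr)$ from the $\tau$-weight grading of $V(\lambda)$, which you claim is ``insensitive to how the composition factors are glued.'' That claim fails exactly where the theorem is hard. The relevant module is $L(\lambda)$, not the Weyl module, and more fundamentally the Jordan type of $u$ is \emph{not} determined by the associated graded of a $u$-stable filtration or grading unless $p$ is large compared with the $\tau$-weights: gluing can merge blocks, the graded picture only bounds the two ranks from below, and you need their difference. Moreover the problematic cases are not confined to bad primes: whenever $|u|>p$ the $sl_2$/cocharacter picture breaks down, and for a regular unipotent this happens for every $p$ below roughly the Coxeter number --- e.g.\ all $p\le 29$ in $E_8$ and all $p\le 5$ in $G_2$, including good primes. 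For these cases your plan offers only ``direct computation,'' but Lawther's tables \cite{La} and L\"ubeck's lists only help after the highest weight has been confined a priori to a short list, and you have no mechanism that does this. The paper achieves it with two tools absent from your proposal: the theorem that a single non-trivial block forces all non-zero $T$-weights of the module to have multiplicity one (proved via $A_1$-overgroups of order-$p$ elements \cite{Te2,PST} and a delicate analysis of indecomposable $SL_2$-modules), combined with the classification of such representations \cite{TZ2}; and, for $|u|>p$, the bound $\dim\phi\le (p-1)p^k(l+3)$ coming from generation of the finite exceptional group by $l+3$ conjugates \cite{GuSa}, fed into an induction through parabolic/Levi subgroups. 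Your Bala--Carter reduction with Suprunenko's classical-group theorem \cite{S,S2} and Smith's theorem is genuinely parallel to the paper's treatment of $F_4$, $E_6$, $E_7$, $E_8$, but it only disposes of elements distinguished in a proper Levi; for elements distinguished in $G$ itself with $|u|>p$, and for all of $G_2$ with $p\le 5$ (the bulk of the paper's work), the proposal as written has no working argument.

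Two further concrete errors. ``Distinguished in $G$, in particular regular'' is false: every exceptional type has non-regular distinguished classes ($G_2(a_1)$, the $F_4(a_i)$, the $E_8(a_i)$ and $E_8(b_i)$, etc.), so the $L=G$ branch must treat all of them, and in characteristics $2,3$ the Levi-distinguished parametrization itself needs Pommerening/Liebeck--Seitz rather than Bala--Carter. Second, your enumeration of the non-trivial irreducibles of $G_2$ of dimension at most $7$ omits $L(\omega_2)$ for $p=3$, which is $7$-dimensional (together with Frobenius twists of everything); the ``if'' direction of the theorem requires verifying the single-block property for the regular element there too, which the paper does via the exceptional graph automorphism, and the $p=3$ class $A_1^{(3)}$ (lying in no $A_1$-subgroup) also needs separate mention in the ``only if'' direction.
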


Theorem \ref{mth1} remains true when replacing $G$ by a finite quasi-simple group of Lie type, as every 
\ir $F$-representation of such a group lifts to a representation of an appropriate simple algebraic group.

Our method is different from those used in \cite{S95,S} and in a sense is indirect. 
We first consider the case where $G$ is of type $A_1$, and for a representation $\rho$, not necessarily 
irreducible,  we prove that the condition that $\rho(G)$ contains a unipotent element with only 
one non-trivial Jordan block implies that all non-zero weights of the representation are of \mult 1. 
Then we consider a special case where  
 $p=0$ or $|u|=p$,  and use a result of \cite{Te2,PST} saying that, with the exception of one class of elements 
in $G_2$, when $p=3$,  $u$  is contained in a simple  
algebraic 
subgroup of $G$  of type $A_1$. This implies that all non-zero weights of $\phi$ are of \mult 1. The \ir 
representations with this property are determined in \cite{TZ2}; the list is very short for $G$ of exceptional 
type.
 The Jordan normal form of all classes of unipotent elements in these representations was
 computed by Lawther \cite{La}. 
This yields the result for  $p=0$ or $|u|=p$. If $|u|>p>0$ then there is a suitable parabolic 
subgroup
 $P$ such that $u\notin R_u(P)$, the unipotent radical of $P$. So the projection $u'$ into a 
Levi subgroup $L$ of $P$ is non-trivial. Then 
 one can observe that ${\rm Jor}(\tau(u'))$ has single non-trivial block for every composition factor $\tau$ of
 the
 restriction 
 of $\phi$ to $L$. This allows us to use induction on the rank of $G$.

\medskip

{\it Notation} Throughout the paper $p$ denotes a prime or 0, and $F$ an algebraically closed field of 
characteristic $p$. Unless otherwise stated, $G$ is a simple simply connected algebraic group over $F$. 
All representations of 
$G$ and $FG$-modules are rational. To say that a representation $\rho$ of $G$ or an $FG$-module $M$ is 
irreducible,
 we write $\rho\in\Irr G$ or $M\in\Irr G$. We let $\{\alpha_1,\dots,\alpha_n\}$ be a base of the root system of $G$.
Our labelling of Dynkin diagrams is as in 
\cite{Bo}.

For an integer $n>0$ we denote by $J_n$ the Jordan block of size $n$, that is, the $(n\times n)$-matrix
with 1 at the positions $(i,i)$ and $(i,i+1)$ for $i=1\ld n$, and 0 elsewhere.  The Jordan block $J_1$ is called 
\emph{trivial}.  For a matrix $x$ we denote by ${\rm Jor (x)}$ a Jordan normal form of $x$. If $x$ is a 
linear transformation of a vector space $V$ we write 
${\rm Jor}_V (x)$ for a Jordan normal form of $x$, especially when we need to specify $V$. A diagonal matrix with
 diagonal entries $x_1\ld x_n$ is denoted by $\diag(x_1\ld x_n)$. A similar notation is used for a block-diagonal 
matrix.

%$$$$$$$

\section{Preliminaries}

In Lemma \ref{bl2} below $\rho_S^{reg}$ denotes the $FS$-module afforded by the regular representation of a 
finite group $S$.

 \begin{lemma}\label{bl2} Let $F$ be an \acf of
 characteristic $p>0$, let
 $G$ be a finite group with  \syl $S$ of order p, and
 let $M$ be an indecomposable $FG$-module. Suppose that $N_G(S)/S$
 is abelian.
 Then there is an indecomposable $FS$-module $L$ such that
 $M|_S=\frac{\dim M-\dim L}{|S|}\cdot \rho^{reg}_S\oplus L$ and $\dim L<p$.\end{lemma}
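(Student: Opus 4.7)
The plan is to reduce via the Green correspondence to an analysis of indecomposable $FN_G(S)$-modules, and then exploit the simple algebra structure of $FN_G(S)$ that comes from the hypothesis $N_G(S)/S$ abelian. If $M$ is projective, $M|_S$ is a sum of copies of $\rho^{reg}_S$ and the conclusion holds with $L=0$. Otherwise $M$ has vertex $S$; since $|S|=p$ forces $S\cap gSg^{-1}=1$ for every $g\in G\setminus N_G(S)$, the Green correspondence provides an indecomposable $FN_G(S)$-module $N=f(M)$ with vertex $S$ such that
\[
M|_{N_G(S)} \;=\; N \oplus Y
\]
with $Y$ a projective $FN_G(S)$-module. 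Since $Y|_S$ is a multiple of $\rho^{reg}_S$, it suffices to show that $N|_S\cong J_i$ for some $1\le i<p$.

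Next I would record the basic structure of $N_G(S)$. By Schur--Zassenhaus, $N_G(S)=S\rtimes H$ with $H\cong N_G(S)/S$ abelian of order coprime to $p$, and the action of $H$ on $S$ is given by a character $\chi\colon H\to \mathbf{F}_p^\times\hookrightarrow F^\times$. Every simple $FN_G(S)$-module is one-dimensional (inflated from a character $\lambda$ of $H$), and the corresponding projective cover $P_\lambda$ is uniserial of dimension $p$ with $P_\lambda|_S\cong\rho^{reg}_S$. Moreover $\Rad(FN_G(S))=I(S)\cdot FN_G(S)$ because the quotient is the semisimple algebra $FH$, so the Loewy length of any $FN_G(S)$-module coincides with the nilpotency index of $s-1$ on the restriction to $S$.

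I would then analyse $N$ in two steps. First, the decomposition $N|_S=\bigoplus_i J_i^{a_i}$ is preserved by the $H$-action on $N$: the element $h\in H$ acts $FS$-semilinearly with respect to the automorphism $s\mapsto s^{\chi(h)}$, which fixes each isomorphism class $J_i$, so each $J_i$-isotypic summand is an $FN_G(S)$-submodule and indecomposability forces $N|_S=J_i^a$ for a single $i$. Second, assume $i<p$ and write the projective cover $P(N)=\bigoplus_\lambda P_\lambda^{c_\lambda}$. Since $N/\Rad N$ has dimension $a$ we have $\sum c_\lambda=a$, so $\dim P(N)=pa$ and
\[
\dim\ker(P(N)\twoheadrightarrow N)\;=\;(p-i)a\;=\;\dim \Rad^i P(N).
\]
But $\Rad^i N=0$ forces $\Rad^i P(N)\subseteq\ker(P(N)\twoheadrightarrow N)$, so the two subspaces coincide and
\[
N\;\cong\;\bigoplus_\lambda \bigl(P_\lambda/\Rad^i P_\lambda\bigr)^{c_\lambda}.
\]
Indecomposability of $N$ then forces a single $c_\lambda$ to equal $1$, giving $a=1$ and $N|_S\cong J_i$; setting $L=J_i$ completes the proof.

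The delicate point is that the identity $\dim P(N)=pa$ really uses $H$ abelian: if some simple $FN_G(S)$-module had dimension $d_\lambda>1$, then $P_\lambda|_S$ would be $d_\lambda\cdot\rho^{reg}_S$ and an indecomposable quotient $P_\lambda/\Rad^i P_\lambda$ could satisfy $N|_S\cong J_i^{d_\lambda}$, spoiling the single-Jordan-block conclusion.
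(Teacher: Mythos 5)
Your overall skeleton is the same as the paper's: split off the projective case, use the Green correspondence in the trivial-intersection situation ($|S|=p$, so $S\cap S^g=1$ for $g\notin N_G(S)$; this is exactly the paper's citation of Alperin) to write $M|_{N_G(S)}=N\oplus Y$ with $Y$ projective, and then use the hypothesis that $N_G(S)/S$ is abelian to force one-dimensional layers. However, the first step of your analysis of $N$ has a genuine gap. The ``$J_i$-isotypic summand'' of $N|_S$ is not a well-defined subspace: over $FS\cong F[x]/(x^p)$ a decomposition into indecomposables is unique only up to isomorphism, not as a collection of subspaces. For example, in $J_1\oplus J_2$ with basis $e,f_1,f_2$, where $xe=xf_1=0$ and $xf_2=f_1$, both $\langle e\rangle$ and $\langle e+f_1\rangle$ are legitimate $J_1$-summands (and the $J_2$-summand is equally non-unique). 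Hence the observation that each $h\in H$ acts semilinearly and preserves the isomorphism classes $J_i$ does not produce an $H$-stable, let alone $FN_G(S)$-stable, isotypic decomposition, and the conclusion $N|_S\cong J_i^a$ for a single $i$ is unproved. Your second step (the projective-cover dimension count forcing $a=1$) is correct, but it takes the unproved isotypic statement as its input.

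The missing ingredient is precisely the structural fact the paper invokes: for a group with a cyclic Sylow $p$-subgroup, a non-projective indecomposable module is uniserial (Feit, Theorem VII.2.4, or the theory of serial algebras for $S\rtimes H$). Given uniseriality, the paper's computation with $L_i=(1-y)^iL$ shows these are $FN_G(S)$-submodules whose layers are completely reducible, hence irreducible, hence one-dimensional because $N_G(S)/S$ is abelian; this gives at once $N|_S\cong J_{\dim N}$ with $\dim N<p$. Equivalently, in your language, every module over this serial algebra is a direct sum of the uniserial quotients $P_\lambda/\Rad^jP_\lambda$, so an indecomposable one is a single such quotient; some input of this kind is unavoidable where you currently appeal to isotypy. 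A minor further point: in your second step you assume $i<p$ without comment. This is harmless---if $N|_S$ were free then $N$ would be projective over $FN_G(S)$ (its index over $S$ being prime to $p$), contradicting vertex $S$, or one can simply note that $M|_S$ free gives the conclusion with $L=0$---but it should be said.
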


\begin{proof} If the restriction $M|_S$ is a projective $FS$-module then the statement is obvious  with $L=0$. 
Suppose otherwise. Set $N=N_G(S)$. By \cite[\S 19, Theorem 1]{AL}, 
 %\cite[Lemma VII.1.5]{Fe}, 
 $M|_N=L \oplus P$ %A_1\oplus A_2$ 
where $P$ is projective, %$A_2|_S$ is projective 
and $L $ is indecomposable. 
%the Green correspondent of $M$. Therefore, $(A_1\oplus A_2)|_S$ isprojective. 
As  $S$ is cyclic, every projective
$FS$-module is free, so $P|_S=\frac{\dim M -\dim
L}{|S|}\cdot \rho_S^{reg}$. Recall that $L$ is 
%indecomposable%(\cite[Theorem III.5.6]{Fe}) and 
uniserial (\cite[Theorem
VII.2.4]{Fe}), that is, the submodule lattice of   $L$  is a
chain. Let $S=\langle y\rangle $, and set $x=1-y$ in the group algebra $FN$, $L_0=L$ and
$L_i=x^iL$ for $i=1,\ldots ,d$ assuming $L_{d}=0$, $L_{d-1}\neq 0
$. So $d\leq p$. %To justify the claim on the dimension of the
%indecomposable components of $L|_S$ it suffices to show that the
%dimension of every  $L_i/L_{i+1}$ is the same for $i=0,1,\ldots
%,k-1 $. 
Observe that $L_1$ is an $FN$-module. (Indeed, for $n\in N
$ we have $nL_1=(1-nyn^{-1})L=(1-y^j)L$ for some integer $j>0$ and
$1-y^j=(1-y)+(1-y)y+\cdots +(1-y)y^{j-1}$.) Therefore, $L_i$ is 
an $FN$-module  for every $i$. As $S$ acts trivially on every
$L_i/L_{i+1}$, it is completely reducible as $FN$-module. Since $L$ is uniserial,
every $L_i/L_{i+1}$ is irreducible. Since $N/S$ is abelian, $\dim (L_i/L_{i+1})=1$.
%By \cite[Theorem VII.2.4]{Fe},
%all composition factors of  $L$ are of the same dimension $c$,
%say. Then $L |_S$ is a direct sum of $c$ copies of an
%indecomposable \rep of $S$ and   
So $d= \dim L $.  Here $d<|S|$ as
otherwise $L|_S$ is free and hence  so is $M|_S$. This completes the proof.\end{proof}

\begin{lemma}\label{ae1}   Let $J_m\in GL_m(F),J_n\in  GL_n(F)$, $1<n<m$ be Jordan blocks of size $m,n$
respectively. Then the Jordan form of $J_m\otimes J_n$ contains at least two blocks of size greater than $1$ 
unless  $m=n=2$ and $p\neq 2$.
\end{lemma}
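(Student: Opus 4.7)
The plan is to reformulate the claim in terms of the nilpotent operator $N := J_m\otimes J_n - I$ on $F^m\otimes F^n$. Since a Jordan block of size $k$ contributes $\min(k,2)$ to $\dim\ker N^2$ and $1$ to $\dim\ker N$, the number of Jordan blocks of $J_m\otimes J_n$ of size $\geq 2$ equals $\dim\ker N^2-\dim\ker N$. Thus the lemma reduces to exhibiting two vectors $v_1,v_2\in\ker N^2$ whose images $Nv_1, Nv_2$ are linearly independent.

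Fix bases $e_1,\dots,e_m$ of $F^m$ and $f_1,\dots,f_n$ of $F^n$ with $J_m e_i = e_i+e_{i-1}$ and $J_n f_j = f_j+f_{j-1}$, where $e_0 = f_0 = 0$. Then on basis elements,
$$N(e_i\otimes f_j) = e_{i-1}\otimes f_j+e_i\otimes f_{j-1}+e_{i-1}\otimes f_{j-1}.$$
The hypothesis $1 < n < m$ supplies $m\geq 3$ and $n\geq 2$, so I would take
$$v_1 = e_2\otimes f_1,\qquad v_2 = 2\,e_3\otimes f_1-e_2\otimes f_2.$$
A direct calculation gives $Nv_1 = e_1\otimes f_1$, $N^2 v_1 = 0$, and $Nv_2 = e_2\otimes f_1 - e_1\otimes f_2 - e_1\otimes f_1$; the crucial identity
$$N^2 v_2 = 2\cdot N^2(e_3\otimes f_1)-N^2(e_2\otimes f_2) = 2(e_1\otimes f_1)-2(e_1\otimes f_1) = 0$$
then holds in every characteristic. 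Since the coefficient of $e_2\otimes f_1$ is nonzero in $Nv_2$ but zero in $Nv_1$, the images are linearly independent, so $v_1, v_2$ are independent modulo $\ker N$ and $\dim(\ker N^2/\ker N)\geq 2$.

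The main subtlety is picking a $v_2$ annihilated by $N^2$ uniformly in $p$: the naive choice $e_3\otimes f_1$ already fails, since $N^2(e_3\otimes f_1) = e_1\otimes f_1\neq 0$, forcing a correction by a multiple of $e_2\otimes f_2$; the coefficient is pinned down by the identity $N^2(e_2\otimes f_2) = 2(e_1\otimes f_1)$, and the combination $2\,e_3\otimes f_1 - e_2\otimes f_2$ cancels these second-order terms universally. The exceptional pair $m=n=2$ (vacuous under the strict inequality $n<m$, but listed for completeness) is handled by the direct $4\times 4$ computation: $N^2(e_2\otimes f_2) = 2(e_1\otimes f_1)$ is nonzero when $p\neq 2$, giving $J_2\otimes J_2 = J_3\oplus J_1$ with a single non-trivial block.
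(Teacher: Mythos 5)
Your proof is correct, and it takes a genuinely different route from the paper. The paper views $J_m\otimes J_n$ as the tensor product $V_m\otimes V_n$ of indecomposable modules for a cyclic group, uses the fact that the number of non-trivial indecomposable summands cannot drop when passing to a submodule (so $V_3\otimes V_2\subseteq V_m\otimes V_n$ reduces everything to the base cases $(m,n)=(2,2),(3,2)$), and settles those base cases by quoting Feit's Clebsch--Gordan-type theorem together with freeness of the $p$-dimensional indecomposable in characteristic $p$. You instead work directly with the nilpotent operator $N=J_m\otimes J_n-\Id$, use the standard identity that the number of blocks of size $\geq 2$ equals $\dim\ker N^2-\dim\ker N$, and exhibit explicit vectors $v_1=e_2\otimes f_1$ and $v_2=2\,e_3\otimes f_1-e_2\otimes f_2$ in $\ker N^2$ with $Nv_1,Nv_2$ independent; I checked the computations ($Nv_1=e_1\otimes f_1$, $N^2(e_3\otimes f_1)=e_1\otimes f_1$, $N^2(e_2\otimes f_2)=2\,e_1\otimes f_1$, so $N^2v_2=0$ identically, and the $e_2\otimes f_1$-coefficient argument for independence) and they are right in every characteristic, including $p=2$ where $v_2$ degenerates to a multiple of $e_2\otimes f_2$. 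What your approach buys is a short, self-contained, characteristic-free verification with no induction and no external citations; it also visibly needs only $m\geq 3$, $n\geq 2$, so it covers the case $n=m\geq 3$ as well (the form of the lemma actually used later, where the stated exception $(m,n)=(2,2)$, $p\neq 2$ is non-vacuous), and your closing computation of $J_2\otimes J_2=J_3\oplus J_1$ for $p\neq 2$ disposes of that exceptional case directly. What the paper's approach buys is that the same submodule-monotonicity framework and Feit's theorem give finer information (the full block structure in the base cases) and fit the module-theoretic language used elsewhere in the section, but as a proof of this particular statement yours is the more elementary and arguably cleaner argument.
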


\begin{proof} Let $X$ be a cyclic $p$-group if $F$ is a field of characteristic $p>0$, otherwise an infinite 
cyclic group. Let $ V_m,V_n$ be indecomposable $FX$-modules of dimensions $m,n$, respectively.
Let $V_i\subset V_m,$ $V_j\subset V_m$ be submodules of dimensions $i,j$, respectively. Then $V_i\otimes V_j $ is a 
submodule of $V_m\otimes V_n $. The number of indecomposable summands of an $FX$-module $M$ of dimension $\geq k$ 
is not less than that on any submodule of
$M$. It follows that the result follows by induction as soon as one verifies this  for $(m ,n)=(2,2),(3,2)$.

If $m=n=2$ then $V_2\otimes V_2 =W_1\oplus W_2$, where the pair $(\dim W_1,\dim W_2)$ is $(2,2)$ if $p=2$ and $(3,1)$ if 
$p\neq 2$. (This is well known. If $p\neq 2$, see \cite[Ch, VII, Theorem 2.7]{Fe}. If $p=2$
then $V_2$ is free, and hence so is  $V_2\otimes V_2$.) By induction, the lemma is true for $p=2$.

Let $m=3,n=2$. If $p=3$ then $V_3$ is free, and hence so is  $V_3\otimes V_2$. If $p\neq 2,3$ then
the lemma again follows by \cite[Ch, VII, Theorem 2.7]{Fe}.\end{proof}

\begin{corol}\label{c21} Let $G$ be an algebraic group and $u\in G$  unipotent.
Let M be an irreducible
FG-module such that ${\rm Jor}_{M}(u)$ has a single non-trivial block. Then either
M is tensor-indecomposable or $G=A_1$ and $\dim M=4$. 
\end{corol}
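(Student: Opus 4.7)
The plan is to prove the contrapositive: if $M$ is tensor-decomposable, then $G$ must be of type $A_1$ and $\dim M = 4$. So assume $M \cong A \otimes B$ as $FG$-modules with $A, B$ irreducible and $\dim A, \dim B \geq 2$. Since $u$ acts on $M$ as $u|_A \otimes u|_B$, the Jordan form ${\rm Jor}_M(u)$ decomposes as the multiset union of ${\rm Jor}(J_{m_i} \otimes J_{n_j})$ over the Jordan blocks $J_{m_i}$ of $u|_A$ and $J_{n_j}$ of $u|_B$. Lemma~\ref{ae1} provides the main leverage on these tensor products.

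First I would rule out the case that $u$ acts trivially on one of the factors. If, say, $u|_B = \Id$, then ${\rm Jor}_M(u)$ consists of $\dim B \geq 2$ copies of ${\rm Jor}(u|_A)$, which either has zero non-trivial blocks (if $u|_A$ is also trivial) or at least two, contradicting the hypothesis of a single non-trivial block. Hence both $u|_A$ and $u|_B$ admit a block of size $\geq 2$; let $m$ and $n$ be the largest such sizes. By Lemma~\ref{ae1}, the summand $J_m \otimes J_n$ alone contributes at least two non-trivial blocks unless $m = n = 2$ and $p \neq 2$, so we must be in this exceptional case and every Jordan block of $u|_A, u|_B$ has size at most $2$. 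Writing ${\rm Jor}(u|_A) = a J_2 \oplus a' J_1$ and ${\rm Jor}(u|_B) = b J_2 \oplus b' J_1$ with $a, b \geq 1$, and using $J_2 \otimes J_2 \cong J_3 \oplus J_1$ in odd characteristic, a direct count gives $ab + ab' + a'b$ non-trivial blocks in ${\rm Jor}_M(u)$. Setting this quantity equal to $1$ forces $a = b = 1$ and $a' = b' = 0$, so $\dim A = \dim B = 2$ and $\dim M = 4$.

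To conclude $G = A_1$, observe that the image of $G$ in $GL(A) \cong GL_2(F)$ is an irreducible closed subgroup containing a non-trivial unipotent element. In characteristic $p \neq 2$, any irreducible closed subgroup of $GL_2$ that does not contain $SL_2$ is contained in the normalizer of a maximal torus and so has no non-trivial unipotent elements; hence the image must contain $SL_2$, identifying $G$ (or the relevant quotient acting on $M$) as a group of type $A_1$. The main obstacle is this final step of algebraic group identification, which is more delicate than the combinatorial Jordan-block analysis. That the exceptional case genuinely occurs is witnessed by $G = SL_2$, $A$ the natural module $V_1$, and $B = V_1^{(p)}$ its Frobenius twist: by Steinberg's tensor product theorem $M = V_1 \otimes V_1^{(p)}$ is irreducible of dimension $4$, and a regular unipotent acts on it as $J_2 \otimes J_2 = J_3 + J_1$, precisely one non-trivial block.
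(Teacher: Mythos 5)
The paper gives no separate proof of this corollary: it is stated as an immediate consequence of Lemma \ref{ae1}, and your combinatorial argument is exactly the intended one. Writing $M\cong A\otimes B$, ruling out trivial action of $u$ on a factor, applying Lemma \ref{ae1} to a pair of largest blocks to force every block size to be at most $2$ with $p\neq 2$, and then counting $ab+ab'+a'b=1$ to get $\dim A=\dim B=2$, hence $\dim M=4$, is correct and is the same route.

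The one weak point is your justification of $G=A_1$. The claim that every irreducible closed subgroup of $GL_2$ not containing $SL_2$ lies in the normalizer of a maximal torus is false for non-connected subgroups: finite irreducible subgroups such as $SL_2(p)$ in characteristic $p>2$ (which even contain non-trivial unipotent elements) are counterexamples. The corollary must in any case be read under the paper's standing convention that $G$ is a simple, in particular connected, algebraic group; otherwise $G=SL_2\times SL_2$ acting on the tensor product of the two natural modules violates the literal conclusion, and your image-in-$GL_2$ argument would only identify the image, not $G$. With $G$ simple the repair is immediate: the image of $G$ in $GL(A)$ is connected and acts irreducibly, hence is non-solvable by Lie--Kolchin and therefore contains $SL_2$, so $G$, being simple, is of type $A_1$; alternatively one can simply note that a simple algebraic group admitting a non-trivial $2$-dimensional irreducible rational module has rank $1$, since every simple group of rank at least $2$ has smallest non-trivial irreducible module of dimension at least $3$. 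Your closing example $V_1\otimes V_1^{(p)}$ for $SL_2$ correctly shows that the exceptional case does occur.
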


We will require the following generation result, due to Guralnick and Saxl.

\begin{lemma}\label{gs51}   {\rm \cite[Theorems 5.1 and 5.4]{GuSa}}  Let $G$ be an exceptional finite 
group of Lie type, of
 untwisted rank $l$, and $x\in (G\setminus Z(G)) $.  Then $G$ can be generated by $l+3$ conjugates of $x$, except, 
possibly, for the case $G=F_4$, $q$ even, $x^2=1$, where  $G$ can be generated by $8$ conjugates of $x$.
\end{lemma}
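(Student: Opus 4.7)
The plan is to combine the Liebeck--Seitz classification of maximal subgroups of exceptional groups of Lie type with the standard probabilistic generation bound. Write $C = x^G$ for the conjugacy class of $x$, and for $k \ge 1$ let $P_k(G,C)$ denote the probability that $k$ uniformly and independently chosen elements of $C$ fail to generate $G$. Every non-generating $k$-tuple is contained in some maximal subgroup of $G$, so
$$
P_k(G,C) \;\le\; \sum_{M} [G:M]\,\Bigl(\frac{|C\cap M|}{|C|}\Bigr)^{k},
$$
the sum running over representatives of the $G$-conjugacy classes of maximal subgroups. Showing that this sum is strictly less than $1$ at $k = l+3$ produces the desired generating $(l+3)$-tuple of conjugates of $x$.

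Estimating the sum requires three ingredients: (i) the description of the maximal subgroups of each exceptional $G$ --- parabolic, reductive of maximal rank, subfield, exotic local, and almost simple classes --- due to Liebeck, Seitz and their collaborators; (ii) uniform upper bounds on the fixed-point ratios $|C \cap M|/|C|$, which for semisimple and unipotent classes follow from the fixed-point-ratio estimates of Lawther, Liebeck and Shalev; (iii) estimates on $[G:M]$ as a function of $q$. Category by category one verifies that each summand is bounded by $q^{-\varepsilon}$ for some fixed $\varepsilon > 0$ and that the number of summands is polynomial in $q$, so the entire sum tends to $0$ as $q \to \infty$. The finitely many small $q$ are handled by direct computer computation, and one checks in each case that $C$ is not itself contained in a proper subgroup (otherwise no number of conjugates could generate $G$).

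The exceptional case singled out in the statement is $G = F_4(q)$ with $q$ even and $x$ an involution. There the involution classes whose long-root/short-root roles get partially interchanged in characteristic $2$ have centralizers of very large order, so the fixed-point ratio $|C \cap M|/|C|$ for maximal subgroups $M$ of maximal rank and for the end-node parabolics remains comparatively large. The probabilistic bound then fails to drop below $1$ at $k = 7 = l+3$, and one instead shows that $8$ conjugates suffice, either by pushing the probabilistic argument one step further or by a direct combinatorial analysis of which proper overgroups of $F_4(q)$ can contain eight conjugates of such an involution simultaneously.

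The principal obstacle is precisely this $F_4(q)$, $q$ even, involution case: the bookkeeping of several involution classes against the various families of maximal subgroups is intricate and the probabilistic bound is too crude to close matters at $k = 7$. Everything else follows from a fairly uniform application of (i)--(iii), with separate verification by computer for small-rank, small-$q$ cases such as $G_2(2)$ and $G_2(3)$, and a routine check that outer automorphisms and isogeny between the simply connected and adjoint forms do not affect the bound.
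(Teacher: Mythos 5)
The paper does not prove this lemma at all: it is quoted verbatim from Guralnick and Saxl \cite[Theorems 5.1 and 5.4]{GuSa}, so the only ``proof'' in the paper is the citation. Your sketch does mirror, in broad outline, the strategy actually used in that literature (maximal-subgroup classification plus fixed-point-ratio estimates feeding a union bound over non-generating tuples), so the approach is not wrong in spirit; but as it stands it is a plan, not a proof, and its quantitative core is missing. The entire content of the theorem is the verification that
$\sum_{M}[G:M]\bigl(|C\cap M|/|C|\bigr)^{\,l+3}<1$
for every class of maximal subgroups, every noncentral class $C$, and \emph{every} $q$, and you never carry this out: ``category by category one verifies'' and ``handled by direct computer computation'' replace exactly the case analysis that constitutes Guralnick--Saxl's Section 5. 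Moreover, the asymptotic step is logically insufficient as stated: if each summand is only bounded by $q^{-\varepsilon}$ while the number of summands is ``polynomial in $q$,'' the sum need not tend to $0$; what saves the argument in reality is that the number of conjugacy classes of maximal subgroups of an exceptional group is essentially bounded (Liebeck--Seitz), and that the fixed-point-ratio bounds of Lawther--Liebeck--Seitz are explicit and valid for all $q$, so that no limit argument and no unspecified small-$q$ computer sweep is needed to obtain the uniform constant $l+3$. An argument that only works for $q$ large, with the remaining (infinitely many classes of elements in finitely many groups, but still substantial) cases deferred to unperformed computations, does not establish the lemma.

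Two further points. First, the claim that one must ``check that $C$ is not itself contained in a proper subgroup'' is vacuous: $G$ is quasisimple and $x\notin Z(G)$, so the normal closure of $x$ is automatically $G$; the issue is only how many conjugates are needed. Second, for the exceptional case $G=F_4(q)$, $q$ even, $x^2=1$, you merely restate that $8$ conjugates suffice ``by pushing the probabilistic argument one step further or by a direct combinatorial analysis''; this is precisely the delicate case (long/short root involutions interchanged by the special isogeny) and it is the one place where the stated bound $l+3$ may genuinely fail, so it cannot be left as an alternative-methods remark. Given that all of the substantive input you invoke (maximal subgroups, fixed-point ratios, the $F_4$ involution analysis) is itself taken from the same circle of papers, the honest options are either to cite \cite{GuSa} as the paper does, or to reproduce the explicit estimates; the proposal as written does neither.
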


\begin{lemma}\label{gb3}  Let $G$ be an \ir subgroup of $GL_n(F)$ and $g\in G$. 
For an eigenvalue $\lam$ of $g$ set $d=\dim (\Id -\lam\up g)V$. Suppose that $G$ is   
generated by $m$ conjugates of $g$. Then $n\leq dm$. 

In addition, if G is an exceptional group of Lie type, of untwisted rank $l$, 
then $n\leq d(l+3)$,
 except, possibly, for  $G$  of type $F_4$, $q$ even,  $x^2=1$, where  $n\leq 8d$.
\end{lemma}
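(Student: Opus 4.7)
The plan is to exploit the irreducibility of $G$ by building a $G$-invariant subspace from the common $\lambda$-eigenspace of the conjugates of $g$. Write $g_1,\dots,g_m$ for the generating conjugates and choose $h_i\in G$ with $g_i=h_ig h_i\up$. Set $V_i=\ker(\Id-\lam\up g_i)$, the $\lam$-eigenspace of $g_i$. Since $(\Id-\lam\up g_i)V=h_i(\Id-\lam\up g)V$ and $h_i$ is invertible, every such image has dimension $d$, so each $V_i$ has codimension $d$ in $V$.

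The heart of the argument is the intersection $U=\bigcap_{i=1}^m V_i$. On $U$ each generator $g_i$ acts as the scalar $\lam$, so in particular $g_iU=U$. Hence $U$ is stable under every $g_i$, and because $\lan g_1,\dots,g_m\ran=G$, it is a $G$-submodule of $V$. By irreducibility $U=0$ or $U=V$; the latter would force $g_i=\lam\cdot\Id$ for every $i$, making $G$ a group of scalar matrices, which contradicts irreducibility of $G\le GL_n(F)$ as soon as $n\ge 2$ (the case $n=1$ being trivial). Therefore $U=0$, and elementary codimension bookkeeping gives
$$ n \;=\; \dim V-\dim U \;\le\; \sum_{i=1}^m\bigl(\dim V-\dim V_i\bigr) \;=\; md, $$
which is the first assertion.

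For the second assertion, note that $g\nni Z(G)$: if $g$ were central in $G$, Schur's lemma would force $g$ to act as a scalar on $V$, again contradicting the irreducibility of $G$ on a space of dimension $\ge 2$. Hence Lemma~\ref{gs51} applies to $x=g$ and produces a generating set of $l+3$ conjugates, or of $8$ conjugates in the specified $F_4$ case. Substituting these values of $m$ into the bound $n\le md$ yields $n\le d(l+3)$, respectively $n\le 8d$.

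There is essentially no technical obstacle; the only point requiring a moment's thought is that the intersection $U$ of the conjugate eigenspaces is $G$-stable, and this is immediate once one notices that each $g_i$ acts on $U$ by the common scalar $\lam$. Everything else is a dimension count combined with the invocation of Lemma~\ref{gs51}.
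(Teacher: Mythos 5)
Your proof is correct and is essentially the paper's argument in dual form: the paper takes the sum $V'=\sum_i(\Id-\lam\up g_i)V$, notes it is $G$-invariant and hence equals $V$, while you take the intersection of the $\lam$-eigenspaces, note it is $G$-invariant and hence zero; the dimension count and the appeal to Lemma~\ref{gs51} are the same. The one blemish is your justification that $g\nni Z(G)$: a central $g$ would indeed be scalar by Schur's lemma, but a scalar element does not contradict irreducibility of $G$ --- what it actually forces is $d=0$, a degenerate case in which the stated inequality fails and which the lemma (and the paper, which cites Lemma~\ref{gs51} without comment) implicitly excludes; in the intended applications $g$ is a nontrivial unipotent element, hence never central.
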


\begin{proof} Let $G=\lan g_1\ld g_m\ran$, where $g_i$ $(1\leq i\leq m)$ is conjugate to $g$ in $G$. Set 
$V'=\sum_{i=1}^m (\Id -\lam \up g_i)V$. Then $\dim V'\leq md$ and $GV'=V'$, whence $V=V'$, and the first statement 
follows. 

If $G$ is a finite exceptional group of Lie type then the additional statement follows from Lemma \ref{gs51}. 

\end{proof} 

\section{Some representations of groups $SL_2(p)$ and $SL_2(F)$}

\begin{lemma}\label{ae3}   Let $D=SL(2,p)\subset G=SL(2,F)$, $u\in D$ a
unipotent element and let $K$ be a tensor-decomposable \ir 
$FG$-module. Suppose that  ${\rm Jor}_K(u)$  contains a single non-trivial   block.  Then 
$p>2$ and $\dim K=4$. 
 In addition, $K|_D$ has a composition factor
 of dimension $3$, and $u$ has a block of size $3$.
\end{lemma}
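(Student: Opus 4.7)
The plan is to exploit Steinberg's tensor product theorem together with Lemma~\ref{ae1}. By Steinberg, we may write $K = L(b_0) \otimes L(b_1)^{(p)} \otimes \cdots \otimes L(b_k)^{(p^k)}$ with $0 \le b_i \le p-1$, and tensor-decomposability of $K$ forces at least two of the $b_i$ to be nonzero. Grouping factors, we can therefore write $K \cong A \otimes B$ as $FG$-modules with $\dim A, \dim B \ge 2$. A key preliminary observation is that since $u \in SL(2,p)$ has entries in $\mathbb F_p$, the Frobenius endomorphism of $G$ fixes $u$, and consequently $u$ acts on each Frobenius twist $L(b_i)^{(p^i)}$ exactly as on $L(b_i)$, namely as the single Jordan block $J_{b_i+1}$.

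Next, let $s$ and $t$ denote the sizes of the largest Jordan blocks of $u$ on $A$ and $B$ respectively; both satisfy $s,t \ge 2$ since $u$ acts non-trivially on every non-trivial irreducible $SL_2(F)$-module. Restricting $A \otimes B$ to $\langle u \rangle$ decomposes it as a direct sum of pieces $J_{s_i} \otimes J_{t_j}$, one of which is $J_s \otimes J_t$. Because $K$ has a unique non-trivial Jordan block, that single summand alone can have at most one non-trivial block, so Lemma~\ref{ae1} forces $s = t = 2$ and $p \ne 2$. To conclude $\dim A = 2$, I would observe that if the Steinberg decomposition of $A$ involved two or more non-trivial factors $L(c_i)^{(p^{d_i})}$, then applying Lemma~\ref{ae1} to the pair with the largest $c_i+1$ already produces a Jordan block of size at least $3$ in $A$, contradicting $s=2$. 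Hence $A = L(c)^{(p^d)}$ for a single $c \ge 1$, and $u$ acting as $J_{c+1}=J_2$ forces $c=1$. Symmetrically $\dim B = 2$, so $\dim K = 4$ and $u|_K$ has Jordan type $J_2 \otimes J_2 = J_3 \oplus J_1$, giving a non-trivial block of size $3$.

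For the final statement, write $K = L(1)^{(p^a)} \otimes L(1)^{(p^b)}$ with $a < b$ (equality is impossible since $K$ is irreducible while $L(1) \otimes L(1)$ decomposes in characteristic $\ne 2$). Restricting to $D$, the Frobenius twists become trivial because $F$ fixes $D$ pointwise, so $K|_D \cong V \otimes V$ where $V$ is the natural $2$-dimensional module of $D$. Since $p > 2$, we have $V \otimes V \cong \Sym^2 V \oplus \Lambda^2 V$, and $\Sym^2 V$ supplies a $3$-dimensional composition factor, as required. The one step that needs care is the recursive application of Lemma~\ref{ae1} pinning down the dimensions of $A$ and $B$; once those are fixed, everything else follows from direct computation.
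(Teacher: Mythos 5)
Your proof is correct and follows essentially the same route as the paper's: write $K$ as a tensor product of (twisted restricted) factors, apply Lemma~\ref{ae1} to the Jordan blocks of $u$ on the two factors to force both to be $2$-dimensional with $p\neq 2$, and then restrict to $D$, where the Frobenius twists trivialize and $V\otimes V$ yields the $3$-dimensional summand and the block $J_3$. The only small imprecision is in ruling out a second non-trivial Steinberg factor inside $A$: in the generic case Lemma~\ref{ae1} literally gives two non-trivial blocks on $A$ (which, after tensoring with the non-trivial $B$, contradicts the single-block hypothesis on $K$) rather than a block of size at least $3$ contradicting $s=2$, but the contradiction is immediate either way.
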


\begin{proof} 
Let  $K=K_1\otimes K_2$, where $K_1$ is a
tensor-indecomposable $FG$-module and $d:=\dim K_1>1$.
By Lemma \ref{ae1},  ${\rm Jor}_{K_1}(u)$  and ${\rm Jor}_{K_2}(u)$
consist of blocks of size at most 2. As $K_1$ is irreducible and tensor-indecomposable,
 ${\rm Jor}_{K_1}(u)$   consists of  a single Jordan block. Therefore, $\dim K_1=2$ and $p\neq 2$. Obviously,  ${\rm Jor}_{K_2}(u)$  cannot have more than one block.  \itf  $K_2$ is tensor-indecomposable, and again by
Lemma~\ref{ae1}, $\dim K_2 = 2$.
 %Then  $\dim K_2=2$ as above, as claimed. In addition, $p>2$, otherwise $K_1$, and hence $ K=K_1\otimes K_2$ is a projective $D$-module of dimension 4, so ${\rm Jor}_{K}(u)$ has two blocks of size 2.
%The additional claim  follows from Lemma \ref{ae1}. 
As $K_1|_D\cong
K_2|_D$,  $K|_D$ contains as a direct summand the adjoint $FG$-module,
which is of dimension 3 for $p>2$.
\end{proof}

\medskip
The \f result is well known (see e.g. Humphreys \cite[12.4]{Hub}):

\begin{lemma}\label{t11}  Let $E$ be an indecomposable
 rational
module of composition length $2$ for a simple algebraic group.  Let
$\mu,\mu'$ be the highest weights of $E/L$, $L$, resp., where $L$
is the maximal submodule of  $E$. Then either $\mu <\mu'$ or $\mu
>\mu' $, and in the latter case  $E$ is of shape $W_\mu/M$,
where $W_\mu$ is the Weyl module of highest weight $\mu$ and $M$
is a submodule of $W_\mu$.
\end{lemma}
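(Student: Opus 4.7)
The plan is to exploit a highest weight vector of weight $\mu$ in $E$, together with the standard fact that a cyclic $G$-module generated by a $U^+$-fixed weight vector of weight $\lambda$ has all $T$-weights $\leq \lambda$ in the dominance order. Fix a maximal torus $T$ contained in a Borel subgroup $B = T U^+$. Since the composition factors of $E$ are $L \simeq L(\mu')$ and $E/L \simeq L(\mu)$, every weight of $E$ is bounded above by $\mu$ or by $\mu'$.

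For the first assertion, suppose towards contradiction that $\mu$ and $\mu'$ are incomparable. Then $\mu$ is a maximal weight of $E$, so any nonzero $v \in E_\mu$ is automatically $U^+$-fixed, i.e., a highest weight vector; moreover $v \notin L$ because $\mu$ is not a weight of $L \simeq L(\mu')$. The cyclic submodule $\langle v \rangle$ has all weights $\leq \mu$; as $\mu' \not\leq \mu$, this forces $L \not\subseteq \langle v \rangle$, and irreducibility of $L$ yields $L \cap \langle v \rangle = 0$. On the other hand the image of $v$ in $E/L$ is a highest weight vector, hence generates $E/L$, so $\langle v \rangle + L = E$. Combining, $E = L \oplus \langle v \rangle$, contradicting indecomposability of $E$. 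Hence $\mu$ and $\mu'$ are comparable.

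Assume now $\mu > \mu'$. Then $\mu$ is the unique maximal weight of $E$, and the construction above again produces a highest weight vector $v \in E_\mu \setminus L$. The same dichotomy applies: either $L \subseteq \langle v \rangle$, in which case $\langle v \rangle + L = E$ gives $\langle v \rangle = E$, or $L \cap \langle v \rangle = 0$ and we obtain $E = L \oplus \langle v \rangle$, contradicting indecomposability. So $\langle v \rangle = E$, i.e., $E$ is generated by a highest weight vector of weight $\mu$. By the universal property of the Weyl module, the map $W_\mu \to E$ sending a chosen highest weight generator of $W_\mu$ to $v$ is surjective, so $E \simeq W_\mu/M$ for some submodule $M \subset W_\mu$.

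I do not expect a serious obstacle; this is a routine application of highest weight theory for rational $G$-modules. The only point meriting care is the closure property of cyclic highest weight modules under the dominance order, which follows from the triangular decomposition of the hyperalgebra of $G$ (negative root elements strictly lower the weight), together with the universal property of $W_\mu$ among rational $G$-modules generated by a highest weight vector of weight $\mu$.
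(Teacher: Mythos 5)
The paper gives no argument for this lemma (it is simply quoted from Humphreys, 12.4), so your proof stands on its own; and for the two cases you do treat it is correct and is the standard highest-weight argument: a nonzero vector of $E_\mu$ is $U^+$-fixed because no weight of $E$ exceeds $\mu$, it lies outside $L$, the cyclic submodule it generates has all weights $\leq\mu$, and the dichotomy coming from irreducibility of $L$ either splits $E$ (impossible) or shows that $E$ is generated by a highest weight vector of weight $\mu$, hence is a quotient of $W_\mu$ by its universal property.

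There is, however, a genuine gap: the lemma asserts \emph{strict} comparability, i.e. it also rules out $\mu=\mu'$, and your proof never addresses this case — the first part only excludes incomparability, and the second part starts from the hypothesis $\mu>\mu'$. Excluding equality is precisely the statement $\mathrm{Ext}^1_G\bigl(L(\mu),L(\mu)\bigr)=0$, which is not a formality (the analogous claim is false for finite groups, e.g. the trivial module of a cyclic $p$-group in characteristic $p$ has nonsplit self-extensions), so it cannot be passed over in silence. Fortunately your own machinery closes the gap: if $\mu=\mu'$, then every weight of $E$ is $\leq\mu$ and $\dim E_\mu=2$, so all of $E_\mu$ is $U^+$-fixed; pick $v\in E_\mu\setminus L$. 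Your dichotomy applies verbatim: $L\cap\langle v\rangle=0$ would split $E$, so $L\subseteq\langle v\rangle$ and $\langle v\rangle=E$, whence $E$ is a quotient of $W_\mu$. But any quotient of $W_\mu$ has $\mu$-weight space of dimension at most $\dim (W_\mu)_\mu=1$ (equivalently, $L(\mu)$ occurs only once in $W_\mu$), contradicting $\dim E_\mu=2$. Adding this short paragraph makes the proof complete.
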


 \begin{corol}\label{a1b} Let $G=A_1$ and let $V$
be an $FG$-module. Suppose that $W$ is either a Weyl module or
indecomposable of composition length at most $2$. Then all weights
of $W$ are of \mult $1$. 
\end{corol}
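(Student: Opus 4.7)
The plan is to split into three sub-cases according to the hypothesis on $W$: (a) $W$ is a Weyl module, (b) $W$ is simple (composition length $1$), or (c) $W$ is indecomposable of composition length exactly $2$. For $A_1$, weights are integers and the Weyl group is $\{\pm 1\}$, acting on weights by negation. In cases (a) and (b) the conclusion is immediate from the standard character of the Weyl module $W_n$ for $A_1$, namely $e^n + e^{n-2} + \cdots + e^{-n}$: each weight occurs with multiplicity $1$, and the same then holds for any quotient, in particular for the simple head $L(\mu) = W_\mu / \text{(maximal submodule)}$.

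It remains to treat the indecomposable length-$2$ case. Let $\mu$ and $\mu'$ denote the highest weights of the head $W/L$ and of the maximal submodule $L$, respectively. By Lemma~\ref{t11}, either $\mu<\mu'$ or $\mu>\mu'$. If $\mu>\mu'$, then Lemma~\ref{t11} presents $W$ as a quotient $W_\mu/M$, so the weights of $W$ form a subset (with multiplicities) of those of $W_\mu$; hence each has multiplicity $1$.

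If instead $\mu<\mu'$, I would pass to the dual module $W^*$. Since the longest Weyl-group element of $A_1$ acts as $-1$, every simple $A_1$-module is self-dual, so $W^*$ is indecomposable of length $2$ with head $L(\mu')$ and socle $L(\mu)$, and now its head carries the larger highest weight $\mu' > \mu$. The previous paragraph then applies to $W^*$, showing all its weights have multiplicity $1$; since dualization negates weights but preserves their multiplicities, the same holds for $W$. The only nontrivial point in the whole argument is this self-duality trick, which reduces the case uncovered by Lemma~\ref{t11} to the covered one; everything else is bookkeeping with Weyl characters for $A_1$.
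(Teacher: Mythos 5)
Your proof is correct and follows essentially the same route as the paper: multiplicity-one weights for $A_1$ Weyl modules (hence for their quotients, including simples), then the Lemma~\ref{t11} dichotomy for length-$2$ indecomposables, passing to the dual in the case $\mu<\mu'$. You merely spell out the dualization step (self-duality of simple $A_1$-modules, head/socle swap, negation of weights) that the paper compresses into ``either $W$ or the dual of $W$ is a quotient of a Weyl module.''
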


\begin{proof}  If $p=0$ then all weights of an \ir $FG$-module are well known to be of \mult 1, and hence 
so are the weights of any Weyl module of $G$ for any $p>0$. If $W$ is an indecomposable of composition length 2 
then, by Lemma  \ref{t11}, either $W$ or the dual of $W$ is a quotient of a Weyl module, whence the claim.
\end{proof}

\begin{lemma}\label{aj1} {\rm \cite[Corollary 3.9]{AJL}}
Let $G=A_1$ and $V_{a\om_1}, V_{b\om_1}$ be \ir $FG$-modules
of highest weights $a\om_1,b\om_1$, respectively. Let $a=\sum _{i\geq 0}a_ip^i$ and
 $b=\sum _{i\geq 0}b_ip^i$ be the $p$-adic expansions of $a$ and $b$, respectively.
  Let $v_p(a+1)$ denote the maximum $r$ such that $p^r|a+1$. 
Suppose that there exists an indecomposable  $FG$-module of composition length $2$ with
factors $V_{a\om_1}$ and $ V_{b\om_1}$.Then there exists a natural number $k\geq v_p(a+1)$ such that $a_i=b_i$ for
$i\neq k,k+1$, and $a_k=p-b_k-2$ and $a_{k+1}=b_{k+1}\pm 1$. In particular, either $a\geq p$ or $b\geq p$. 
\end{lemma}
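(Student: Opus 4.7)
The plan is to combine Steinberg's tensor product theorem with the affine linkage principle for $SL_2$. An indecomposable $FG$-module of composition length $2$ with factors $V_{a\om_1}$ and $V_{b\om_1}$ exists \ii at least one of $\mathrm{Ext}^1_G(V_{a\om_1}, V_{b\om_1})$ or $\mathrm{Ext}^1_G(V_{b\om_1}, V_{a\om_1})$ is nonzero, so the task reduces to identifying when these Ext groups are nonvanishing.

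First, Steinberg's theorem gives
$$V_{a\om_1} \cong \bigotimes_{i \geq 0} V_{a_i\om_1}^{(p^i)}, \qquad V_{b\om_1} \cong \bigotimes_{i \geq 0} V_{b_i\om_1}^{(p^i)},$$
and the Frobenius twist preserves $\mathrm{Ext}^1_G$. A K\"unneth-type argument lets one strip off tensor factors with $a_i = b_i$, reducing to the case where the $p$-adic digits of $a$ and $b$ differ only within a short window.

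Second, one invokes the linkage principle: $\mathrm{Ext}^1_G(L(\lambda), L(\mu)) = 0$ unless $\lambda$ and $\mu$ lie in the same orbit of the affine Weyl group $\widetilde{W}$ of type $\widetilde{A}_1$ acting by the dot-action. The reflections of $\widetilde{W}$ at affine hyperplanes $\{x : x + 1 \in p^{k+1}\mathbb{Z}\}$ combined with the Steinberg decomposition pin down the condition $a_k + b_k = p - 2$, i.e.\ $a_k = p - b_k - 2$, with the extra relation $a_{k+1} = b_{k+1} \pm 1$ capturing the shift in the $(k+1)$-st digit produced by the reflection. The bound $k \geq v_p(a+1)$ appears because a trailing run $a_0 = \cdots = a_{k-1} = p - 1$ allows the digit at position $k$ to be renormalised via carry; the index labelling where the Ext lives must be chosen past this run for the digit description to be unambiguous.

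The main obstacle is realising these Ext groups concretely: knowing that linkage is necessary is routine, but showing that precisely the specified pairs support a nontrivial $\mathrm{Ext}^1$ (and that no others do) requires a careful analysis of the submodule structure of the two-factor tensor product $V_{a_k\om_1}^{(p^k)} \otimes V_{a_{k+1}\om_1}^{(p^{k+1})}$ combined with an induction on the number of nonzero $p$-adic digits. This is essentially the content of Corollary 3.9 in the cited reference \cite{AJL}.
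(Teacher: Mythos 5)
The paper does not prove this lemma at all: it is quoted verbatim from Andersen--Jorgensen--Landrock \cite[Corollary 3.9]{AJL}, so there is no internal proof to compare yours against. Judged on its own terms, your proposal is a reasonable sketch of the standard way such $\mathrm{Ext}^1$ computations for $A_1$ are organised (Steinberg tensor product decomposition, stripping of common $p$-adic digits, reduction to the Frobenius kernel), but it is not a proof. The decisive step --- showing that $\mathrm{Ext}^1_G(V_{a\om_1},V_{b\om_1})\neq 0$ exactly for the stated digit patterns, and for no others --- is precisely what you defer to ``a careful analysis of the submodule structure \dots combined with an induction'' and then to Corollary 3.9 of \cite{AJL} itself; that makes the argument circular, since the statement being proved \emph{is} that corollary. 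If you want a genuine proof along these lines you must carry out that analysis: the ``K\"unneth-type argument'' has to be made precise via the Lyndon--Hochschild--Serre spectral sequence for the first Frobenius kernel $G_1\subset G$ (equivalently the Cline--Parshall--Scott five-term sequence), which expresses $\mathrm{Ext}^1_G(L(\lambda_0)\otimes L(\lambda')^{[1]},L(\mu_0)\otimes L(\mu')^{[1]})$ in terms of $\mathrm{Hom}_{G_1}$, $\mathrm{Ext}^1_{G_1}$ and $\mathrm{Ext}^1_G$ of the twisted parts, and one then needs the explicit computation of $\mathrm{Ext}^1_{G_1}(L(r),L(s))$ as a $G/G_1$-module for $SL_2$; the digit relations $a_k=p-b_k-2$, $a_{k+1}=b_{k+1}\pm 1$ come out of that computation, not out of the linkage principle alone.

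Two smaller points. Your appeal to ``reflections of $\widetilde W$ at affine hyperplanes $\{x: x+1\in p^{k+1}\mathbb Z\}$'' is not the linkage principle as usually stated: linkage for $G$ involves the affine Weyl group $W_p$ (hyperplanes modulo $p$ only); the higher congruences arise only after the Frobenius-kernel reduction, so as written this step conflates two different statements. Also, your heuristic for $k\geq v_p(a+1)$ (``renormalisation via carry'') is unnecessary: since $a_k=p-b_k-2\leq p-2$, the digit $a_k$ cannot equal $p-1$, so $k$ automatically lies beyond the trailing run of digits $p-1$ of $a$, i.e.\ $k\geq v_p(a+1)$ follows at once from the other conditions. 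For the purposes of this paper the correct move is simply to cite \cite{AJL}, as the authors do.
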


\begin{corol}\label{aj2}  
Let  $p>3$ and let $G,a,b$ be as
in Lemma {\rm \ref{aj1}}. Let $E$ be an $FG$-module with composition factors
$V_{a\om_1}$ and $ V_{b\om_1}$.
%\medskip $(1)$  
Suppose that $a=p^i+p^j$ and $b=p^r+p^t$ where
$i<j$, $r<t$.  Then $E$ is completely reducible.

\end{corol}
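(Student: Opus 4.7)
The plan is to argue by contradiction. Suppose $E$ is not completely reducible. Since $E$ has composition length $2$, any proper nonzero submodule must be one of the two simple factors, so a failure of complete reducibility forces $E$ to be indecomposable. Applied to this indecomposable module (possibly after interchanging the roles of $a$ and $b$), Lemma \ref{aj1} yields an integer $k \geq v_p(a+1)$ with $a_i = b_i$ for $i \neq k, k+1$, $a_k = p - b_k - 2$, and $a_{k+1} = b_{k+1} \pm 1$.

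The crux is to rule out the relation $a_k = p - b_k - 2$, which I rewrite as $a_k + b_k = p - 2$. Since $a = p^i + p^j$ with $i < j$, the $p$-adic expansion of $a$ has digit $1$ at positions $i, j$ and digit $0$ elsewhere; similarly for $b = p^r + p^t$. Hence every $p$-adic digit of $a$ or $b$ lies in $\{0,1\}$, so in particular $a_k + b_k \leq 2$. But the hypothesis $p > 3$ gives $p - 2 \geq 3$, a contradiction. Therefore no nonsplit extension of $V_{a\omega_1}$ by $V_{b\omega_1}$ (or in the reverse order) exists, and $E$ must be completely reducible.

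No serious obstacle is anticipated; the argument is essentially immediate once Lemma \ref{aj1} is in hand. I note that the hypothesis $p > 3$ enters only through the inequality $p - 2 > 2$, and the special shape of $a$ and $b$ is used only to bound each $p$-adic digit by $1$. The second part of the conclusion of Lemma \ref{aj1}, namely $a_{k+1} = b_{k+1} \pm 1$, as well as the constraint $k \geq v_p(a+1)$, play no role in this argument; the first digit equation alone already forces the contradiction.
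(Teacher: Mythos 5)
Your proof is correct and follows essentially the same route as the paper: argue by contradiction, note that a non-split $E$ of composition length $2$ is indecomposable, apply Lemma~\ref{aj1}, and contradict the relation $a_k=p-b_k-2$ using the fact that every $p$-adic digit of $a$ and of $b$ lies in $\{0,1\}$ while $p-2\geq 3$. Your version is in fact slightly cleaner: the paper locates the differing digit positions explicitly (cases $i<r$, then $i=r$ and $j<t$) before reaching the same contradiction, whereas your uniform bound $a_k+b_k\leq 2$ makes that case analysis, and the remark that $v_p(a+1)=0$, unnecessary.
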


\begin{proof} Suppose the contrary.   Note that $a+1$ is coprime to $p$ as $p>2$. So
$v_p(a+1)=0$. %, that is,
%Lemma \ref{aj1} makes  no restriction to $k$ in our case.
We can assume (by swapping the
modules) that $i\leq r$. Suppose that $i< r$. Then $b_i=0$ and
$a_i=1$; by Lemma \ref{aj1}, $a_i=p-2$, which is false as $p>3$.
So $i=r$.    Then $j\neq t$, and we can assume $j<t$. Then $a_{j}=1, b_{j}=0$,
 and, by Lemma \ref{aj1}, $1=a_j=p-2$, a contradiction. 
\end{proof}

\noindent{\bf{Remark.}}The assumption on $a,b$ in Corollary \ref{aj2} is equivalent to saying that $V_{a\om_1}$ and $ V_{b\om_1}$ are tensor-decomposable and  $\dim V_{a\om_1}=\dim V_{b\om_1}=4$.

\begin{lemma}\label{ae4}  Let $D=SL(2,p)$ 
and let $S$
 be an indecomposable $FD$-module. Let $u\in D$, $o(u)=p$ and
 suppose that  ${\rm Jor}_{S}(u)$ contains a single non-trivial   block. Then
 $\dim S\leq p+1$, and
  one of the \f holds (where $l$ is the composition length of $S$):

\medskip
$(1)$ $l=1$  and $\dim S\leq p;$

\medskip
$(2)$  $l=2$, $p>2$ and $\dim S\in\{p-1,p+1\}$ or $p=2$ and $\dim S=2;$

\medskip
$(3)$   $l=3$, $p>3$,    $\dim S=p+1$ and
the dimensions of the composition factors of $S$ are $2,p-3,2 ;$

\medskip
$(4)$ $S$ has a  composition factor of dimension $p-2$ and all  other factors are trivial.

\medskip
\noindent In addition, if $\dim S\geq p$ then ${\rm Jor}_{S}(u)$ contains a %most one
 block of size $p$.

\end{lemma}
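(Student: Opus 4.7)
The plan is to apply Lemma~\ref{bl2} with $G = D = SL(2,p)$ and Sylow $p$-subgroup $P = \langle u\rangle$ of order $p$. Its normalizer $N = N_D(P)$ is a Borel subgroup of $D$ with $N/P$ cyclic of order $p-1$, hence abelian. Lemma~\ref{bl2} then yields
\[
  S|_P \;=\; c \cdot \rho^{\mathrm{reg}}_P \;\oplus\; L|_P,
\]
where $L$ is an indecomposable (uniserial) $FN$-module of dimension $d := \dim L < p$ and $c = (\dim S - d)/p$. Since $\rho^{\mathrm{reg}}_P|_{\langle u\rangle} = J_p$ and $L|_P = J_d$, the Jordan form ${\rm Jor}_S(u)$ consists of $c$ blocks of size $p$ together with one block of size $d$. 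The hypothesis of a single non-trivial block then forces either $c = 0$ (with $u$ acting on $S$ as the single block $J_d$, $d<p$), or $c = 1$ with $d \in \{0,1\}$ (giving one block of size $p$ plus at most one trivial block). In both cases $\dim S \leq p+1$; moreover, whenever $\dim S \geq p$ we must be in the $c = 1$ regime, so a Jordan block of size $p$ appears, proving the final assertion of the lemma.

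I would then determine the composition structure by combining this Sylow-level information with the extension theory of irreducible $FD$-modules. Each composition factor of $S$ is one of $V_0, V_1, \ldots, V_{p-1}$, and every length-$2$ subquotient of $S$ is an indecomposable $FD$-extension. The admissible pairs of highest weights $(a,b)$ for such extensions are severely constrained: via Lemma~\ref{aj1} (applied to a lift to an $A_1$-extension between modules with appropriate higher $p$-adic digits, or equivalently via the Loewy structure of the projective indecomposable $FD$-modules), they satisfy a rigid relation modulo~$p$. Combined with $\dim S \leq p+1$, this cuts the possible composition factor sequences to a short list. The case $l = 1$ gives case~(1) at once. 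For $l = 2$ the admissible pairs with $\dim S \in \{p-1, p+1\}$ yield case~(2), with the trivial-factor subcase overlapping case~(4); the small characteristic $p = 2$ gives the isolated configuration listed separately. For $l = 3$ the middle factor participates in two non-split extensions, so the extension rule together with $\dim S = p+1$ forces the unique configuration of dimensions $(2, p-3, 2)$, which is case~(3) and requires $p > 3$. For $l \geq 4$, iterating the constraint shows that the only possibility is one composition factor of dimension $p - 2$ with all other factors trivial, giving case~(4).

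The main obstacle is bridging Lemma~\ref{aj1} (stated for the algebraic group $A_1$) with the Ext theory of $FD$-modules, and then verifying that each candidate composition factor sequence actually arises as the composition series of an indecomposable $FD$-module on which $u$ has only one non-trivial Jordan block. Concretely, for each tentative configuration one must check that the action of $u$, computed from the uniserial $FN$-structure of $L$ together with the projective summand $c \cdot \rho^{\mathrm{reg}}_P$, does not split into more than one non-trivial block; this pins down how the non-trivial composition factors may be interleaved with trivial ones and produces the precise enumeration in cases (2)--(4).
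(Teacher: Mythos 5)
Your first half is exactly the paper's argument: applying Lemma~\ref{bl2} to $D$ with $N_D(\langle u\rangle)/\langle u\rangle$ cyclic of order $p-1$, reading off that ${\rm Jor}_S(u)=(m\cdot J_p,J_d)$ with $d<p$, and concluding $m\leq 1$, $m=1\Rightarrow d\leq 1$, hence $\dim S\leq p+1$ and the size-$p$ block when $\dim S\geq p$. The gap is in the second half, in two places. First, the key input you need is the classification of indecomposable $FD$-modules of composition length $2$ (for $p>2$ their dimension is $p-1$ or $p+1$); your proposed derivation of this from Lemma~\ref{aj1} does not work as stated, because that lemma concerns rational modules of the algebraic group $A_1$ and its conclusion (``either $a\geq p$ or $b\geq p$'') says there are \emph{no} length-$2$ indecomposables with both highest weights $p$-restricted, whereas $SL(2,p)$ has many such extensions among restricted irreducibles. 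So a direct application gives the wrong answer for the finite group, and the ``lift with appropriate higher $p$-adic digits'' is precisely the nontrivial bridge you acknowledge but do not build. The paper avoids this entirely by citing the known structure of such $FD$-modules (Andersen--Jorgensen--Landrock, or Humphreys), and it treats $p=2$ by a separate short argument ($SL_2(2)\cong S_3$, non-trivial factors projective), which your sketch also omits.

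Second, your enumeration for $l\geq 3$ is not exhaustive as written: you assume $\dim S=p+1$ in the $l=3$ case and conclude the configuration $(2,p-3,2)$, but nothing you have shown rules out, say, a uniserial module of dimension $p$ with factors of dimensions $1,\,p-2,\,1$; such a configuration has $l=3$ and must be routed into case (4), not (3). The paper closes this by applying the length-$2$ dimension fact twice: to an indecomposable length-$2$ submodule $T$ (forcing $\dim T=p-1$ since $\dim S\leq p+1$) and to an indecomposable quotient of $S$, then splitting on $\dim S/T\in\{1,2\}$ to land in (4) or (3) respectively. Finally, your closing concern about verifying that each candidate configuration ``actually arises'' is not needed: the lemma only asserts that one of (1)--(4) must hold, not that all cases occur.
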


\begin{proof} The first claim
is well known if $S$ is \irt Suppose $S$ is reducible, and set $U=\lan u\ran$.
%If $S$ is projective then $S|_U$ is a free module (since $U$ is cyclic).
%So  $\dim S=p$ in this case by assumption. Suppose $S$ is not projective. Then, 
By
Lemma \ref{bl2}, the Jordan form of $u$ is $(m\cdot J_p,J_d)$ for some $d<p$.
By assumption,
$0\leq m\leq 1$, and   $m=1$ implies $d\leq 1$. Therefore,
$\dim S\leq p+1$. The additional claim (after item (4)) is obvious.

Consider the options for $l$.
If $l=1$  then the dimension of $S$ is well known to be at most $p$.
Suppose that $l>1$ and $p=2$. Then $D\cong SL_2(2)$, and  the non-trivial composition factors of $S$ are 
projective $D$-modules.
So either $S$ is \ir or trivial on the subgroup of $D$ of order 3. Then $\dim S=2$.
Let $p>2$.  If $l=2$ then  $\dim S=p-1$ or $p+1$, see \cite[p. 49]{AJL} or \cite[p.111]{Hub}.    
If $l\geq 3$ then (3) and (4) follow by applying (2) to the factors of $S$ of composition length 2.  
Indeed, let $T$ be an indecomposable submodule of $S$ of composition
length 2. By (2), $\dim T=p\pm 1$, and $\dim S\leq p+1$ by the above. So $\dim T=p- 1$, and $m:=\dim S/T\leq 2$. 
Let $d,e$ be the dimensions of the composition  factors of $T$, where $d+e=p-1$. As $S$ is indecomposable, 
there is an indecomposable quotient of $S$ of dimension $d+m$ or $e+m$. By (2),
$d+m$ or $e+m$ equals $p-1$. We may assume that $d+m=p-1$ (by reordering $d,e$). If $m=2$ then $d=p-3$ and 
$e=2$ so (3) holds. Here $p\neq 3$ as $d\neq 0$. If $m=1$ then $d=p-2$ and $e=1$, that is (4) holds.
\end{proof}

\medskip
The \f fact is trivial but it is convenient to state it explicitly as 
this is frequently used. 

\begin{lemma}\label{tr8} Let $M$ be an $FG$-module, and 
  $u\in G$   unipotent.
Suppose that  ${\rm Jor}_M(u)$  contains a single non-trivial  block.  Then the Jordan form of $u$ on any 
submodule or  quotient module
of $M$ contains at most one non-trivial block. The same is true for every
quotient $M_2/M_1$, where $M_1\subset M_2$ are $FG$-submodules of $M$.\end{lemma}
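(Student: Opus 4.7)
The plan is to recast the condition ``at most one non-trivial Jordan block'' as a numerical invariant that is visibly inherited by submodules and quotients. Setting $T = u - \Id$, a direct check on indecomposable Jordan blocks shows that the number of non-trivial blocks of $u|_V$ on any finite-dimensional $u$-invariant space $V$ equals $\dim(\ker T^2|_V/\ker T|_V)$, and equivalently $\dim(TV/T^2V)$. The hypothesis on $M$ is exactly that this invariant is at most $1$, so the task reduces to showing each of the two equivalent invariants is bounded above on submodules and quotients by its value on $M$.

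For a submodule $N \subseteq M$, the kernel formulation is convenient: since $\ker T^k|_N = \ker T^k|_M \cap N$, the natural map $(\ker T^2|_M \cap N)/(\ker T|_M \cap N) \to \ker T^2|_M/\ker T|_M$ is injective, its kernel being $(\ker T^2 \cap N) \cap \ker T = \ker T|_M \cap N$. Hence $u|_N$ has at most one non-trivial Jordan block. For a quotient $M/N$, I would dually use the image formulation: writing $\bar T = u - \Id$ on $M/N$, one has $\bar T(M/N) = (TM+N)/N$ and $\bar T^2(M/N) = (T^2M+N)/N$, so $\bar T(M/N)/\bar T^2(M/N) \cong (TM+N)/(T^2M+N)$, which is a quotient of $TM/T^2M$ via the evident surjection $Tm\mapsto Tm+(T^2M+N)$. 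Thus its dimension is at most $\dim(TM/T^2M)\le 1$. The final assertion about $M_2/M_1$ for $M_1\subseteq M_2\subseteq M$ then follows by first applying the submodule step to $M_2\subseteq M$ and then the quotient step to $M_2\twoheadrightarrow M_2/M_1$.

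There is essentially no obstacle here: as the paper itself remarks, the lemma is a trivial bookkeeping fact about Jordan normal forms. The only mild subtlety is that the two equivalent characterizations of the invariant behave asymmetrically — kernels pull back cleanly to submodules while images push forward cleanly to quotients — so one must pick the right formulation in each case, but each verification is a one-line diagram chase. (If one preferred, the quotient case could also be derived from the submodule case by dualizing, since $u$ and its dual have identical Jordan forms and $(M/N)^*\cong N^{\perp}\subseteq M^*$, but the direct argument above is just as short.)
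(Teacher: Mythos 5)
Your proof is correct, and it takes a slightly different route from the paper's. The paper reformulates the hypothesis qualitatively: $u$ has at most one non-trivial block on $M$ if and only if $(u-1)M$ is uniserial as an $F\langle u\rangle$-module; since submodules of a uniserial module are uniserial, the submodule case follows at once, the quotient case is obtained by applying this to the dual of $M$, and the statement for $M_2/M_1$ follows by combining the two. You instead make the invariant numerical, identifying the number of non-trivial blocks on $V$ with $\dim(\ker T^2|_V/\ker T|_V)=\dim(TV/T^2V)$ for $T=u-\Id$, and you check directly that the kernel form can only decrease on submodules and the image form can only decrease on quotients; this avoids the dualization step (which you mention only as an alternative) and in fact yields the marginally stronger conclusion that the number of non-trivial blocks on any subquotient of $M$ is bounded by the number on $M$, not merely that ``at most one'' is inherited. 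Both arguments are elementary bookkeeping of the same spirit (recast the block condition as something manifestly monotone under sub and quotient); the paper's is shorter to state, while yours is more self-contained, since every step --- the block-by-block computation of the invariant, the identity $\ker T^k|_N=N\cap\ker T^k|_M$, and the surjection $TM/T^2M\twoheadrightarrow (TM+N)/(T^2M+N)$ --- is verified explicitly. There is no gap.
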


\begin{proof}  Indeed, $u$ has a single block of size $k>1$ on $M$ \ii the module $(u-1)M$ is uniserial as an 
$F\langle u \rangle $-module.
  This property is inherited by submodules. Applying this to the dual of  $M$, we get the result for quotient 
modules.
  These also imply the result for  $M_2/M_1$.\end{proof}

\begin{lemma}\label{bb1} Let 
$ G$ be of type $ A_1$, and let  $u\in G$ be a
unipotent element. Let $M$ be an $FG$-module and $M_0$ the maximal trivial submodule of $M$.
% Let $M_0$ be the maximal trivial $G$-submodule of$M$.
Suppose that  ${\rm Jor}_M(u)$  contains a single non-trivial   block.  Then the composition series of 
$M$ contains at most two non-trivial terms. More precisely, one of the \f holds:

\medskip
 $(A)$ the composition series of $M$ contains at most one non-trivial term; or

 %$C_H(T)$ contains a (root) unipotent element or

 \medskip
$(B)$ $p>2$, the composition length of $M/M_0$ is $2$ and   $p+1\leq \dim M/M_0\leq p+2$.

 \medskip

\noindent Moreover, in case (B), we have:\medskip

$(C)$ If $a\om_1,b\om_1$ are the highest weights of the composition factors of $M/M_0$, with $a\geq b$,
then $a\geq p$.

 %there are at most two weights of \mult $2$ and
% all %but one %other
%non-zero $G$-weights of $M$ are one-dimensional.
\end{lemma}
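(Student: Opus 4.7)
If $u=1$ then $M=M_0$ and (A) holds vacuously; so I assume $u\neq 1$, forcing $p>0$ and $u$ of order $p$. Set $\bar M=M/M_0$. By Lemma~\ref{tr8}, ${\rm Jor}_{\bar M}(u)$ still has at most one non-trivial block, and maximality of $M_0$ together with $\mathop{\rm Ext}\nolimits^1_G(V_0,V_0)=0$ for rational modules gives that $\bar M$ has no nonzero trivial $FG$-submodule. For any non-trivial simple submodule $V_{a\om_1}\subseteq\bar M$ ($a\geq 1$), Steinberg's tensor product theorem shows $u$ acts as $\bigotimes_i J_{a_i+1}$ with at least one block of size $\geq 2$. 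A semisimple socle with two simple summands would carry at least two non-trivial Jordan blocks, contradicting Lemma~\ref{tr8} on $\mathop{\rm soc}\nolimits(\bar M)$. So $\mathop{\rm soc}\nolimits(\bar M)=L$ is a single non-trivial simple module.

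Next I restrict $\bar M$ to $D=\SL_2(\FF_p)$ with $u\in D$ and decompose $\bar M|_D=\bigoplus N_i$ into indecomposable $FD$-summands. Since $D$-conjugates of $u$ generate $D$ and each $N_i$ is $D$-stable, any $N_i$ on which $u$ acts trivially is a $1$-dimensional trivial $FD$-module. Thus $\bar M|_D=P\oplus Q$, with $P$ indecomposable carrying the sole non-trivial Jordan block and $Q$ a sum of trivial $FD$-modules. Lemma~\ref{ae4} gives $\dim P\leq p+1$ and describes the $FD$-composition structure of $P$ via cases (1)--(4). Every non-trivial $FG$-composition factor of $\bar M$ restricts to an $FD$-module containing a non-trivial $FD$-composition factor, which must appear inside $P$; a case-by-case analysis of (1)--(4) of Lemma~\ref{ae4} combined with the single non-trivial block condition bounds the number of non-trivial $FG$-composition factors of $\bar M$ by two. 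If at most one, (A) holds.

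Otherwise $\bar M$ has exactly two non-trivial composition factors $V_{a\om_1}, V_{b\om_1}$ with $a\geq b\geq 1$, and I target (B) and (C). Extra trivial composition factors are excluded using Lemma~\ref{aj1}: any such trivial layer in $\bar M$ forces, via the long exact sequence for $\mathop{\rm Ext}\nolimits^1(V_0,-)$, one of $L, L_2$ to satisfy $\mathop{\rm Ext}\nolimits^1(V_0,-)\neq 0$; by Lemma~\ref{aj1} the relevant factor must equal $V_{(2p-2)p^k\om_1}$ of dimension $2p-2$, giving $\dim\bar M\geq 2p+1>p+2$, contradicting the dimension bound coming from Lemma~\ref{ae4}. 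Hence $\bar M$ has composition length exactly~$2$, $p+1\leq\dim\bar M\leq p+2$, and $p>2$ from cases (2)--(3) of Lemma~\ref{ae4}. For (C), if $a<p$ then also $b<p$, so both weights have only the $p$-adic digit at position~$0$ non-zero; the digit conditions of Lemma~\ref{aj1} then fail for every $k$ (for $k=0$, $a_1=b_1\pm 1=\pm 1\neq 0$; for $k\geq 1$, $a_k=p-2\neq 0$), giving $\mathop{\rm Ext}\nolimits^1(V_{a\om_1},V_{b\om_1})=0$, contradicting the existence of the non-split length-$2$ extension. Hence $a\geq p$. (When $b=0$ arises within~(B), a direct check shows this only occurs for $p=3$ with $a=2p-2=4$, still satisfying $a\geq p$.) The principal obstacle is the detailed bookkeeping that translates the $FD$-composition structure of $P$ into bounds on $FG$-composition factors of $\bar M$, together with the systematic use of Lemma~\ref{aj1} to rule out extra trivial layers.
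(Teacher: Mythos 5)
Your setup is sound and parallels the paper's: you reduce to $\bar M=M/M_0$ with simple non-trivial socle, restrict to $D=SL_2(p)$, split off the unique indecomposable summand $P$ carrying the non-trivial Jordan block, and invoke Lemma~\ref{ae4}; your derivation of (C) from (B) via Lemma~\ref{aj1}, and your observation that a trivial composition factor of $\bar M$ forces an adjacent factor $V_{(2p-2)p^k\om_1}$, are also correct in outline. But the quantitative heart of the lemma is asserted rather than proved. First, the claim that the number of non-trivial $FG$-composition factors of $\bar M$ is at most two is left to an unspecified ``case-by-case analysis'': case (3) of Lemma~\ref{ae4} allows $P$ to have three non-trivial $FD$-factors of dimensions $2,p-3,2$, so a priori $\bar M$ could have three non-trivial $FG$-factors (twists of the natural module around a factor restricting to $V_{(p-4)\om_1}$); excluding this needs the digit conditions of Lemma~\ref{aj1} (the paper does this in step (11)), and you never carry it out. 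Moreover, relating $FG$-factors to $FD$-factors is complicated by tensor-decomposable (Frobenius-twisted) factors, e.g.\ $V_{(1+p)\om_1}|_D\cong V_{2\om_1}\oplus V_0$, whose restrictions contribute trivial $FD$-constituents; the paper needs Lemma~\ref{ae3} and Corollary~\ref{aj2} precisely here, and your argument ignores this phenomenon.

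Second, the bounds in (B) are not derived. Lemma~\ref{ae4} bounds only $\dim P\leq p+1$, not $\dim\bar M=\dim P+\dim Q$, since $Q$ (trivial for $D$) is not controlled at that stage --- trivial $FD$-constituents can come from tensor-decomposable $FG$-factors even when $\bar M$ has no trivial $FG$-factor. So your exclusion of trivial layers, which contradicts ``$\dim\bar M\geq 2p+1>p+2$, the dimension bound coming from Lemma~\ref{ae4}'', uses a bound you have not established (indeed it is essentially the conclusion of (B) itself), and the subsequent sentence ``Hence $\bar M$ has composition length exactly $2$, $p+1\leq\dim\bar M\leq p+2$'' appears with no argument for either inequality. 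For instance, Lemma~\ref{aj1} alone permits a non-split extension of $V_{p\om_1}$ by $V_{(3p-2)\om_1}$ of total dimension $3p-1>p+2$; ruling such configurations out requires the detailed interplay between the $D$-restriction and the Jordan condition that the paper runs through steps (5)--(11), using in particular McNinch's semisimplicity criterion \cite[Theorem 2]{Mn} (for the lower bound $p+1$ and to kill length-two modules of dimension $\leq p$), none of which appears in your proposal. Until these points are filled in, the proof of the bound ``at most two non-trivial terms'' and of the dimension statement in (B) is missing.
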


\begin{proof} For $p=0$ the lemma is trivial. So we assume $p>0$. 
  Obviously, we may assume that $M$ is indecomposable. %Set $M:=V |_G$.
 Let $D$ denote the subgroup of $G$
 isomorphic to $SL_2(p)$. Then $M|_D=S\oplus T$, where
 $T$ is a trivial $D$-module and $S$ is an indecomposable one.
  We first prove $(A)$ and $(B)$, in a sequence of steps (1) to (11).

\medskip
\noindent (1)  Every submodule of $M/M_0$ is indecomposable. In particular, the socle
 of $M/M_0$ is irreducible.

\medskip
Indeed, if $L$ is a submodule of $M/M_0$ and $L=L_1\oplus L_2$,
where $L_1,L_2$ are non-zero $FG$-modules, then one of them is trivial by Lemma \ref{tr8},
which contradicts Lemma~\ref{aj1} and the definition of $M_0$.

\medskip
\noindent (2) Let $M_1\subset M_2$ be $FG$-submodules of $M$.
Suppose that    ${\rm Jor}_{M_2/M_1}(u)$ has a block of size
  $p$. Then $M_1\subseteq M_0$ and $M/M_2$ is a trivial $FG$-module.

\medskip
Indeed,  $M_2/M_1$ has an indecomposable
$F\lan u\ran$-submodule $X$ of dimension $p$. Hence $X$ is projective and injective,
so $M/M_1|_{\langle u \rangle}=X\oplus Y$, where $Y$ is an
$F\lan u\ran$-module. By Lemma \ref{tr8}, $Y$ is a trivial $F\lan u\ran$-module.  As $X\subset
M_2/M_1$,  it follows that $u$ is trivial on $M/M_2$, and hence $M/M_2$ is a
trivial $FG$-module. Applying this to the dual of $M$, we observe
that $u$ acts trivially on $M_1$. So the claim follows.

%Set $K=M_2/M_1$ and $L=M_3/M_1$. Suppose the contrary. Then it suffices to handle the case where $\dim L/K=

\medskip
\noindent (3) Let $K$ be a  composition factor of $M$. %The lemma is true if
If  ${\rm Jor}_K(u)$  has a block of size $p$ then statement (A) holds.

\medskip
This follows from (2).

\medskip
\noindent (4) %The lemma is true i
If $p=2$ then the statement (A) holds.

\medskip
Indeed, in this case $|u|=2$ and $M$ has a non-trivial composition factor $K$, say. Then $u$ must have
a block of size 2 on $K$, so the result is true by (3).

\medskip
\centerline{\it{From now on we assume $p>2$.}}

\medskip
\noindent (5) %The lemma is true i
If $M$ has  a composition factor $K$ of dimension $p$ then  (A) holds.

\medskip
Since $\dim K=p$,  $K$ is tensor-indecomposable, and hence $K|_D$ is \irt
Then it is a projective $D$-module and  ${\rm Jor}_K(u)$  consists of a single block
 of size $p$. So the claim follows from (3).

\medskip
\noindent (6) Let $M_1\subset M_2\subset M_3$ be $FG$-submodules of $M$ such that $M_2/M_1$
is \ir and  $M_3/M_1$ is indecomposable. If $M_3/M_2$ is  trivial then (A) holds.

\medskip
Set $K=M_2/M_1$ and $L=M_3/M_1$. Suppose the contrary. Then it suffices to handle the case where $\dim (L/K)=1$.
If $\dim
K\leq p-1$ then $\dim L\leq p$. By \cite[Theorem 2]{Mn}, $L$ is completely reducible, contrary to the assumption.
 % which violates  Corollary \ref{aj2}(2). 
   So $\dim K>p$ by (5).
Therefore, $K$ is tensor-decomposable. By   Lemma \ref{ae3}, $\dim
K=4$ and  $p>3$ by Lemma \ref{ae3} and (3).   Then $\dim L=5\leq p$,
so $L$ is decomposable by \cite[Theorem 2]{Mn}. 
This is a contradiction.

\medskip
\noindent (7) Either statement (A) holds or all composition factors of $M/M_0$ are non-trivial and $M/M_0$ is uniserial.

\medskip
If $M/M_0$ has a trivial composition factor then the claim follows from (6). Otherwise, this follows from 
 Lemma \ref{tr8} and (1).

\medskip
\noindent (8) Suppose that     $M$  has a tensor-decomposable composition factor $K$, say.
Then either (A) holds and $\dim K=4$ or    $p>3$ and $(B)$ holds. %$\dim M/M_0=p+2$. %then the statement (A) holds.

\medskip
 By Lemmas \ref{tr8} and \ref{ae3},  $\dim K=4$,  the  composition factors of $K|_D$ are of dimensions $1,3$ and  
 ${\rm Jor}_K(u)$
 has a block of size 3. 
%If $p=2$ then (5) implies (A).
Suppose that (A) is false.
Then $p>3$ by (3).  %(as $u$ has no block of size $p$ on $K$ by (3)).
 Furthermore,  $ K$ can be included in a subquotient  $L$, say,  of composition length 2. Let $K'$ be the second factor of $L$.  By (7), $K'$ is non-trivial and $L$ is indecomposable. 
 
 Suppose first that $K'$ is tensor-decomposable. Then $\dim K'=4$ and, by Lemma \ref{aj2}, $L$ is completely reducible, which is false.

% as above, $\dim K'=4$, so $\dim L=8$. Then $p\leq 7
 %$ by \cite[Theorem 2]{Mn}.  Let $S$ be a non-trivial composition factor of the restriction $L|_D$.   $p=5$, and 

% Note that the highest weight of $K$ is of shape
 %$p^i+p^j$ for some $j>i\geq 0$.  Suppose that $K'$
%is tensor-decomposable. Then the highest weight of $K'$ is of shape $p^k+p^l$ and $l>k\geq 0$.
%By Lemma \ref{aj1}, $L$ is completely reducible, which is false

So $K'$ is tensor indecomposable,
and hence $K'|_D $ is \irt Set $m=\dim K'$, so $\dim L=4+m$.
Then  $1<m\leq p$, and
   $m<p$ by (5). Then $L|_D$  has composition factors of dimensions $3,1,m$, and
hence is decomposable (otherwise contradicts Lemma \ref{ae4}(3)).  As   ${\rm Jor}_L(u)$  has a single non-trivial  block,
it follows %by assumption 
that $L|_D$
contains an indecomposable  submodule $X$, say, with composition
factors of dimensions $3,m$. By Lemma \ref{ae4}, we have $3+m=p-1$
or $p+1$. In the former case $\dim L=p$, which is false in view of \cite[Theorem 2]{Mn}.
 So $3+m=p+1$, and hence $\dim L=p+2$. Furthermore,
by Lemma \ref{ae4},   ${\rm Jor}_X(u)$  contains a
block of size $p$. Let $L=M_2/M _1$ for some $FG$-modules $M_1\subset M_2$.
Then, by (2), $M/M_2$ and $M_1$ are  trivial $FG$-modules. So we deduce that $M_1=M_0$ and $M=M_2$, i.e. $L=M/M_0$, 
so  (B) follows.

\medskip
\noindent (9)   If the restriction $(M/M_0)|_D$ has a trivial composition factor then $(A)$ or $(B)$ holds.

\medskip
Suppose the contrary. Then by (7), all composition factors of $M/M_0$ are non-trivial, and tensor-indecomposable 
factors remain \ir upon restriction to $D$. So  one of  the  composition factors of $M$  is  tensor-decomposable,
which contradicts (8).

\medskip
\noindent (10) Either $(A)$ or $(B)$ holds,  or the restriction $(M/M_0)|_D$ is indecomposable and has no trivial
composition factor.

\medskip
Suppose that neither $(A)$ nor $(B)$ holds. Then, by (9),  $(M/M_0)|_D$    has no trivial
composition factor. By Lemma \ref{ae3}, every composition factor of   $M/M_0$ is tensor-indecomposable, and hence
irreducible for $D$. Then $(M/M_0)|_D$ is indecomposable in view of  Lemma \ref{tr8}.

\medskip
\noindent (11)  Statement (A) or (B) holds.    
%$p+1\leq\dim (M/M_0)\leq p+2$. 
%and  if $p=3$ then $\dim (M/M_0)=4$.

\medskip
Suppose the contrary. Then, by (8),  %$M$ has a tensor-decomposable composition factor then,  by (8),  $(A)$ or $(B)$ holds.
%reasoning in (8) shows that either $p=3$ and 
%$\dim (M/M_0)=4$ or $p>3$ and $\dim (M/M_0)=p+2$. 
%So suppose that
 the composition factors of $ M$ are tensor-indecomposable and hence are irreducible for $D$. By (10),
 $(M/M_0)|_D$ is indecomposable with no trivial composition factor.  Then, by Lemma \ref{ae4},  $p>2$ and $\dim (M/M_0)\leq p+1$.
If  $\dim (M/M_0)\leq p$ then $M/M_0$ is completely reducible by \cite{Mn},
which contradicts (1). % unless $(A)$ holds.

So %either (A) or (B) holds, or 
we have $p > 2$ and $\dim(M/M_0) = p+1$, %Moreover, we may
and all composition factors are %tensor indecomposable and so
 irreducible for $D$. 
If case (3) of
Lemma \ref{ae4} holds, then $ M/M_0$ has composition length 3 with tensor indecomposable
factors of dimension 2, $p - 3$, 2, contradicting Lemma \ref{aj1}. So Lemma \ref{ae4}(2) must hold and
$M/M_0$ has composition length 2 as in (B).

%Lemma \ref{tr8} and the definition of $M_0$. This completes the proof of (A) and (B).

Finally, statement (C) follows from (B) and Lemma \ref{aj1}. 
%Suppose the contrary, and let $a<p$. By Corollary \ref{a1b}, all non-zero weights of $M/M_0$ are of \mult 1. 
%If $a-b$ is even then weight $b\om_1$ occurs in $V_{a\om_1}$, so $a-b$ is odd. In this case the central involution of $SL_2(F)$ acts non-trivially on $M/M_0$, and then $M/M_0$ is decomposable, contrary to (1).
\end{proof}

\begin{lemma}\label{aa12}  Let $ G\cong A_1$ and let $M$ be an $FG$-module.
Let  $u\in G$ be a unipotent element.
Suppose that   ${\rm Jor}_M(u)$  contains a single non-trivial block.
Then all non-zero weights of $M$, with respect to a fixed maximal torus of $G$, are of \mult $1$.
\end{lemma}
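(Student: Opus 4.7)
The plan is to invoke Lemma \ref{bb1} and analyze its two structural alternatives. Two standing observations make the reduction easy: first, every irreducible rational $A_1$-module $V(c\omega_1)$ has all its weights of multiplicity $1$ (a standard consequence of Steinberg's tensor product theorem, together with a $p$-adic/parity uniqueness check on digit decompositions); second, the maximal trivial submodule $M_0$ contributes only to the multiplicity of the weight $0$. So for any non-zero weight $\lambda$, its multiplicity in $M$ equals the sum of its multiplicities in the non-trivial composition factors of $M/M_0$.

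In case (A) of Lemma \ref{bb1}, $M$ has at most one non-trivial composition factor. Its non-zero weights therefore coincide with the non-zero weights of that single irreducible factor, each appearing once; so the conclusion is immediate.

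Case (B) is the substantive one: here $p>2$, and $M/M_0$ is indecomposable of composition length $2$, with non-trivial composition factors $V(a\omega_1)$ and $V(b\omega_1)$, $a\geq b\geq 1$ and $a\geq p$ by clause (C). By the remarks above, it suffices to prove that the weight sets of $V(a\omega_1)$ and $V(b\omega_1)$ are disjoint. I would feed the pair $(a,b)$ into Lemma \ref{aj1} to extract an index $k$ with $a_i=b_i$ for $i\neq k,k+1$, $a_k+b_k=p-2$, and $|a_{k+1}-b_{k+1}|=1$. Steinberg then describes the weights of $V(a\omega_1)$ (resp.\ $V(b\omega_1)$) as the integers $\sum_i e_ip^i$ with $|e_i|\leq a_i$ (resp.\ $b_i$) and $e_i\equiv a_i$ (resp.\ $b_i$) modulo $2$. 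Assuming for contradiction a common weight $w=\sum e_i'p^i=\sum e_i''p^i$, set $d_i=e_i'-e_i''$; then $\sum d_ip^i=0$, with $d_i$ even of absolute value at most $2a_i\leq 2(p-1)$ for $i\neq k,k+1$, whereas $d_k$ is forced to be \emph{odd} of absolute value at most $a_k+b_k=p-2$ (since $a_k+b_k=p-2$ is odd, the digits $a_k,b_k$ have opposite parities). A digit-by-digit induction using $p$ odd then forces $d_0=\dots=d_{k-1}=0$ (at each stage the current $d_i$ must be a multiple of $p$, even, and strictly less than $2p$ in absolute value, hence zero); reducing the remaining equation modulo $p$ yields $d_k\equiv 0\pmod p$ with $|d_k|<p$, so $d_k=0$, contradicting the oddness of $d_k$.

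The only obstacle of substance is the disjointness claim in case (B); the key idea is to convert the combinatorial output of Lemma \ref{aj1} into a parity mismatch at the $k$-th $p$-adic digit, which rules out any shared weight between $V(a\omega_1)$ and $V(b\omega_1)$.
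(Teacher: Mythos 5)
Your argument is correct, but it takes a genuinely different route from the paper in the substantive case. The paper also starts from Lemma \ref{bb1}, but then finishes in one stroke via Corollary \ref{a1b}: in case (A) the unique non-trivial composition factor, and in case (B) the indecomposable length-two module $M/M_0$, is (up to duality, by Lemma \ref{t11}) a quotient of a Weyl module for $A_1$, and all weights of such Weyl modules have multiplicity $1$; no weight combinatorics is needed. You instead prove that in case (B) the two irreducible factors $V_{a\omega_1}$ and $V_{b\omega_1}$ have \emph{disjoint} sets of weights, by combining the digit relations of Lemma \ref{aj1} ($a_k+b_k=p-2$ odd, hence a parity mismatch at the $k$-th digit) with Steinberg's description of the weights of irreducible $A_1$-modules and a mod-$p$ digit induction; your argument is sound, including the uniqueness-of-digits step that gives multiplicity $1$ within each irreducible factor. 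The trade-off: the paper's proof is shorter and avoids any $p$-adic analysis, while yours is more explicit and even yields the stronger conclusion that the weight supports of the two factors are disjoint, at the cost of invoking the (standard, and correctly sketched) multiplicity-one property of irreducible $A_1$-modules. Two small points you should make explicit: the statement of Lemma \ref{bb1}(B) does not literally assert that $M/M_0$ is indecomposable with both factors non-trivial; under the standing hypothesis this follows in one line (a trivial factor would place you in case (A), and a decomposition of $M/M_0$ into two non-trivial summands would produce two non-trivial Jordan blocks for $u$, contradicting Lemma \ref{tr8}), and the indecomposability is exactly what licenses your appeal to Lemma \ref{aj1}.
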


 \begin{proof} For $p=0$ the lemma is trivial, for $p>0$  this follows from Lemma \ref{bb1} and Corollary \ref{a1b}.\end{proof} 

\begin{lemma}\label{aa1}   Let
$ G$ be a simple algebraic group and $X\cong A_1$ a subgroup of $G$.
Let  $u\in X$ be a unipotent element and  $M$ an $FG$-module.
Suppose that  ${\rm Jor}_M(u)$  contains a single non-trivial  block.   Then %one of the \f holds:
%\medskip
% $(a)$ $C_H(T)$ contains a (root) unipotent element or\medskip
%$(b)$ %there are at most two weights of \mult $2$ and
%either all %but one %other
all non-zero weights of $M$, with respect to a fixed maximal torus of $G$, are of \mult $1$. Moreover $M$
 is tensor-indecomposable, unless $p\neq 2$, $G=A_1$
 and $\dim M=4$. 
\end{lemma}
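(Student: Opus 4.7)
The plan is to reduce the lemma to the case $G=A_1$ (Lemma~\ref{aa12}) by restricting $M$ to $X$, and then to cover all non-zero $T$-weights of $M$ by conjugating $X$ inside $G$.

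For the weight-multiplicity claim, I fix a maximal torus $T$ of $G$ containing a maximal torus $T_X$ of $X$, and let $\alpha^\vee\in X_*(T)$ denote the cocharacter whose image is $T_X$. Since the Jordan form of $u$ on $M$ is an endomorphism invariant, the hypothesis on $u$ passes to the restriction $M|_X$, and Lemma~\ref{aa12} applied to $X$ yields that every non-zero $T_X$-weight space of $M$ is one-dimensional. This immediately gives $\dim M_\mu\leq 1$ for any non-zero $T$-weight $\mu$ with $\mu|_{T_X}\neq 0$. To treat a non-zero $T$-weight $\mu$ with $\mu|_{T_X}=0$, I would vary $X$ by Weyl conjugation: since $G$ is simple, the reflection representation of the Weyl group $W$ on $X^*(T)\otimes\RR$ is irreducible, so the $W$-orbit of $\mu$ cannot lie in the hyperplane $(\alpha^\vee)^\perp$, and some $w\in W$ satisfies $\langle w\mu,\alpha^\vee\rangle\neq 0$. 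Lifting $w$ to $\dot w\in N_G(T)$, the conjugate subgroup $X^{\dot w}$ contains $u^{\dot w}$, has maximal torus $T_X^{\dot w}\subseteq T$ with cocharacter $w\cdot\alpha^\vee$, and $u^{\dot w}$ has the same Jordan form on $M$ as $u$; re-applying Lemma~\ref{aa12} to $X^{\dot w}$ then forces $\dim M_\mu\leq 1$.

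For the tensor-indecomposability statement, suppose $M\cong M_1\otimes M_2$ as $FG$-modules with $\dim M_1,\dim M_2>1$. Writing each $M_i$ as a direct sum of Jordan blocks for $u$, the tensor product $M$ decomposes block-by-block: each block $J_a\otimes J_b$ with $a,b>1$ contributes at least two non-trivial Jordan blocks by Lemma~\ref{ae1}, except when $a=b=2$ and $p\neq 2$ (where $J_2\otimes J_2=J_3\oplus J_1$), and each non-trivial block $J_a$ of $M_1$ tensored with a trivial block of $M_2$ reproduces $J_a$. A short case analysis on these contributions shows that ${\rm Jor}_M(u)$ contains a single non-trivial block only if $u$ acts on each $M_i$ as a single Jordan block $J_2$ with $p\neq 2$; in particular $\dim M=4$ and each $M_i$ is a $2$-dimensional rational $FG$-module. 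If $G$ has rank $\geq 2$, no non-trivial 2-dimensional rational $G$-module exists (every irreducible composition factor of dimension $\leq 2$ is trivial, and $H^1(G,F)=0$ for simple simply connected $G$), so $M_i$ would be trivial, contradicting non-triviality of $u$ on $M$. Hence $G\cong A_1$, recovering the stated exception.

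The main technical subtlety will be the Weyl-conjugation step in the weight argument, which depends on the irreducibility of the Weyl reflection representation for simple $G$ and on a clean choice of lift $\dot w\in N_G(T)$ carrying $X$ to a conjugate $A_1$-subgroup with maximal torus inside $T$. Once that is set up, both halves of the lemma follow from Lemmas~\ref{aa12}, \ref{tr8}, \ref{ae1} together with the standard rank-$\geq 2$ structure theory of simple simply connected groups.
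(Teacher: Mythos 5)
Your proof is correct, and it rests on the same two pillars as the paper's argument---Lemma~\ref{aa12} applied to an $A_1$-subgroup whose maximal torus sits inside $T$, and the irreducibility of the Weyl group acting on $X^*(T)\otimes{\mathbb Q}$---but you finish both halves differently. For the multiplicity claim the paper argues by contradiction: if some non-zero weight $\lambda$ has multiplicity $\geq 2$, so do all its $W$-conjugates, Lemma~\ref{aa12} forces $T_1$ to act trivially on every $M_{w(\lambda)}$, and then a spanning argument (write any weight as a rational combination of the $w(\lambda)$, clear denominators, and use that the torus is divisible, $T_1^m=T_1$) shows every weight of $M$ vanishes on $T_1$, i.e.\ $T_1$ acts trivially on $M$, a contradiction. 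You instead handle each weight $\mu$ directly by conjugating the subgroup $X$ by a suitable Weyl element so that the conjugated torus pairs non-trivially with $\mu$, and re-apply Lemma~\ref{aa12} to $X^{\dot w}$ (whose unipotent $u^{\dot w}$ has the same Jordan form on $M$); this avoids the denominator/divisibility step entirely, and the only blemish is the harmless $w$ versus $w^{-1}$ bookkeeping in identifying the cocharacter of $T_X^{\dot w}$. For tensor-indecomposability the paper simply cites Corollary~\ref{c21}, which is stated for \emph{irreducible} $M$, whereas your block-by-block analysis via Lemma~\ref{ae1}, forcing each factor to be a single $J_2$ with $p\neq 2$, together with the observation that a $2$-dimensional rational module of a simple group of rank $\geq 2$ is trivial (all composition factors trivial and $\mathrm{Ext}^1_G(F,F)=0$), is self-contained and does not need irreducibility of $M$, so it matches the actual hypotheses of Lemma~\ref{aa1} more closely. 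One pedantic point: Lemma~\ref{ae1} is stated with $1<n<m$, but its proof and exception clause cover $n=m$, so your use of it for equal block sizes is consistent with its intended content (the paper uses it the same way).
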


\begin{proof} Suppose the contrary, and fix a maximal torus $T$ of $G$, and a maximal torus $T_1$ of $X$ with 
$T_1\subset T$. Let $M_\lam$ be a $T$-weight space of weight $\lam\neq 0$ such that $\dim M_\lam>1$. Then 
$\dim M_{w(\lam)}>1$ for every $w\in W$, where $W$ is the Weyl group of $G$. By  Lemma \ref{aa12}, $T_1$ acts 
trivially on $M_{w(\lam)}$ for every $w\in W$. Recall that the weights of $G$ are elements of 
${\rm Hom}(T, GL_1(F))$, which is  a ${\mathbb Z}$-lattice of rank $r$ equal to the rank of $G$. 
The Weyl group acts on $T$ and
 hence on ${\rm Hom}(T, GL_1(F))$, so $W$ is realized as a subgroup of $GL_r({\mathbb Z})$.   Let 
$R$ be the vector space over the rational number field ${\mathbb Q}$   spanned by the weights, and
 this yields an embedding  of  $W$ into $GL_r({\mathbb Q})$. It is well known that $W$ is an \ir subgroup of 
$GL_r({\mathbb Q})$.  
 The subspace of $R$ spanned by $\{w(\lam):w\in W\}$ is $W$-stable, and hence coincides with $R$.
 Therefore, every weight $\mu$ can be written as $\sum_{w\in W} a_w w(\lam)$ with $a_w\in {\mathbb Q}$.
 Let $m$ be an integer such that $ma_w\in {\mathbb Z}$ for every $w\in W$. Then 
$m\mu=\sum_{w\in W} (ma_w) w(\lam)$, where the coefficients $ma_w$ are   integers.  This 
implies that $(m\mu)(T_1)=1$,
 whence $\mu(T_1^m)=1$. 
 Note that for every $t_1\in T_1$ there is an element  $t\in T_1$ such that $t^m=t_1$, 
in other words $T_1=T_1^m$. Therefore, $\mu(T_1)=1$. This is true for every weight $\mu$ of $T$. 
This implies that $T_1$ acts trivially on $M$, which is a contradiction. 
 
 For the second assertion in the lemma see Corollary \ref{c21}. \end{proof}

\begin{theo}\label{fr55}  
 Let $ G$ be a simple algebraic group, % and $X\cong A_1$ a subgroup of $G$.
  $u\in G$  a unipotent element and  $M$ an $FG$-module.
Suppose that   ${\rm Jor}_M(u)$  contains a single non-trivial  block.  If  $p>0$ and $u^p=1$, or if $p=0$, then 
 all  non-zero weights of $M$, with respect to a fixed maximal torus of $G$, are of multiplicity 1,
 unless possibly $G=G_2,p=3$ and $u$ lies in the class $A_1^{(3)}$
as in {\rm \cite{PST}}.
\end{theo}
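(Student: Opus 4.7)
The plan is to reduce this immediately to Lemma \ref{aa1} by exhibiting a closed connected subgroup of type $A_1$ in $G$ that contains $u$. Since Lemma \ref{aa1} requires only the existence of a subgroup $X\cong A_1$ with $u\in X$ and the single-block hypothesis on $\text{Jor}_M(u)$, once such an $X$ is produced, all non-zero $T$-weights of $M$ are forced to have multiplicity $1$.

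To produce $X$, I would invoke the result of \cite{Te2,PST} (referenced in the introduction) which states that if $p=0$ or $|u|=p$, then $u$ lies in a closed connected simple subgroup of $G$ of type $A_1$, with the single exception of the class $A_1^{(3)}$ in $G_2$ when $p=3$. In the case $p=0$, every non-identity unipotent element of $G$ is contained in a one-parameter subgroup, which embeds into a subgroup of type $A_1$ (this is the easy case, and can be handled by the well-known $\mathfrak{sl}_2$-triple construction, or simply by quoting \cite{Te2}). In the case $p>0$ and $u^p=1$, one applies directly the main theorem of \cite{PST}; the sole obstruction is the excluded class $A_1^{(3)}$ in $G_2$ at $p=3$, which is precisely the exception listed in the statement.

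Having secured such $X\cong A_1$ with $u\in X$, Lemma \ref{aa1} applies to the $FG$-module $M$ viewed together with the subgroup $X$: its hypothesis that $\text{Jor}_M(u)$ contains a single non-trivial block is exactly our assumption. The conclusion of Lemma \ref{aa1} gives that every non-zero weight of $M$ with respect to a fixed maximal torus of $G$ has multiplicity $1$, which is the desired conclusion.

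There is no real obstacle here beyond correctly quoting the classification from \cite{Te2,PST} and identifying the exceptional class. The substantive work was done in Lemma \ref{aa1} (via Lemma \ref{aa12} and the Weyl-group orbit argument on the weight lattice) and in the theory of good $A_1$-subgroups containing unipotent elements of order at most $p$ developed in \cite{Te2,PST}; the present theorem is their direct combination.
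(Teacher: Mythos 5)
Your proposal is correct and follows essentially the same route as the paper: quote \cite{Te2,PST} to place $u$ (of order $p$, or unipotent with $p=0$) in a closed subgroup of type $A_1$, with the single exception of the class $A_1^{(3)}$ in $G_2$ for $p=3$, and then conclude by Lemma \ref{aa1}. This matches the paper's proof of Theorem \ref{fr55} point for point.
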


\begin{proof}   By the main results of \cite{Te2,PST},   every element of order $p$ in a simple algebraic group  
in defining characteristic $p$ is contained in a simple algebraic subgroup of type $A_1$, with the exception of the class of elements labelled $A_1^{(3)}$ in $G=G_2$ when    
$p= 3$. If $p=0$, every unipotent element is well known to lie in 
a subgroup of type $A_1$. So the statement follows from Lemma \ref{aa1}.  \end{proof} 
%{fr5}. 

\section{Representations of groups of exceptional type}

In view of Theorem~\ref{fr55}, it is useful to know which irreducible representations of exceptional 
algebraic groups have all non-zero weights of multiplicity 1. Moreover, for our application
to the question about the Jordan block structure of unipotent elements in the representation
space,  Corollary~\ref{c21} shows that we can restrict our attention to tensor-indecomposable representations. We have the following result taken from {\rm \cite{TZ2}}.

 \begin{lemma}\label{fr5} Let $ G$ be a simple algebraic group of exceptional type and  
let $M$ be a tensor-indecomposable \ir $FG$-module with  highest weight $\om\neq 0$. Suppose that all non-zero weights of 
$M$ are of \mult one. Then $(G,\om)\in\{(E_6,p^a\om_i), i=1,2,6, (E_7,p^a\om_j), j=1,7, (E_8,p^a\om_8), (F_4,p^a\om_k), k=1,4,
(G_2,p^a\om_l), l=1,2\}$, for some $a\geq 0$.
\end{lemma}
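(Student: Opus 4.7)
The plan is to combine Steinberg's tensor product theorem with a case-by-case analysis across the exceptional types, exploiting the fact that the list contains exactly the minuscule and adjoint (or smallest nontrivial) representations.

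First, I would reduce to restricted highest weights. By Steinberg's tensor product theorem, a tensor-indecomposable irreducible $FG$-module in positive characteristic has highest weight of the form $p^a\lambda$ with $\lambda$ a $p$-restricted dominant weight, and its set of weights (with multiplicities) is obtained from that of $L(\lambda)$ by scaling by $p^a$. In characteristic zero, no Frobenius twist issue arises. So the problem reduces to classifying the restricted (or characteristic zero) dominant weights $\lambda\neq 0$ such that $L(\lambda)$ has all non-zero weights of multiplicity one.

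Next, I would rule out weights with "large" support. If $\lambda=\sum_i c_i\omega_i$ has at least two indices with $c_i>0$, or a single $c_i\geq 2$, then the Weyl module $V(\lambda)$ has weights $\lambda-\alpha$ (for $\alpha$ a positive root in suitable position) that attain multiplicity at least $2$ just from the obvious linear relations among the simple root descents. Dimension estimates via Weyl's formula and/or explicit Freudenthal recursion would confirm that these multiplicities survive the passage to $L(\lambda)$. So I can reduce to $\lambda=\omega_i$ a single fundamental weight.

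Then I would go through each exceptional type and each fundamental weight, invoking Premet's theorem (in good characteristic) or direct computation to show that the weights of $L(\omega_i)$ coincide with those of $V(\omega_i)$ in the cases claimed. The positive cases are the classical ones: the minuscule representations $L(\omega_1),L(\omega_6)$ for $E_6$ and $L(\omega_7)$ for $E_7$ (all weights of multiplicity $1$); the adjoint modules $L(\omega_2)$ for $E_6$, $L(\omega_1)$ for $E_7$, $L(\omega_8)$ for $E_8$ and $L(\omega_2)$ for $G_2$ (non-zero weights are roots, hence of multiplicity $1$); the two $26$-dimensional quasi-minuscule $F_4$-modules $L(\omega_1),L(\omega_4)$; and the $7$-dimensional $G_2$-module $L(\omega_1)$. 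The remaining fundamental weights must then be excluded; this is where the bulk of the work lies, since one must show for each $\omega_i$ outside the list that some non-zero weight of $L(\omega_i)$ has multiplicity $\geq 2$. For some of these (e.g.\ $\omega_3,\omega_4,\omega_5$ in $E_6$, or the high-index fundamental weights of $E_7,E_8$) the Weyl module is huge and the claim is easy from dimension counting alongside orbit considerations. The harder cases, and the main obstacle, are the small-$p$ situations where $L(\omega_i)$ can differ significantly from $V(\omega_i)$: here one needs to verify, either via Lübeck's tables or an inductive use of Jantzen's sum formula, that the multiplicities of non-zero weights do not all collapse to $1$ in the irreducible quotient. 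Once each exceptional type has been so surveyed, the list in the lemma is exhaustive, and the proof concludes by invoking the cited result in \cite{TZ2}.
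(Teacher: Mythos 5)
There is nothing in the paper to compare your argument with: Lemma~\ref{fr5} is not proved here, it is quoted from \cite{TZ2}, so the paper's ``proof'' is a citation. Judged on its own, your outline has the right skeleton (Steinberg's tensor product theorem reduces to $p$-restricted highest weights, after which one must classify the restricted $\lambda\neq 0$ with all non-zero weights of $L(\lambda)$ of multiplicity one), but it is a plan rather than a proof, and it has two genuine gaps. First, the step eliminating $\lambda$ with two non-zero labels, or one label $\geq 2$, is only asserted. The general principle you invoke is not even true for simple groups at large: for type $A_n$ and $\lambda=c\omega_1$ all weight multiplicities of $V(\lambda)$ are $1$, and for $\lambda=\omega_1+\omega_n$ (adjoint) the only weights of multiplicity $\geq 2$ are zero weights, which the hypothesis of the lemma does not see; so for the exceptional types this reduction needs an actual verification, not an appeal to ``obvious linear relations''. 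More seriously, the claim that a multiplicity $\geq 2$ in the Weyl module ``survives the passage to $L(\lambda)$'' is precisely the characteristic-$p$ difficulty: weight multiplicities of $L(\lambda)$ do drop below those of $V(\lambda)$ exactly in the small-$p$ situations one has to exclude, and dimension estimates via Weyl's formula or Freudenthal's recursion (which compute $V(\lambda)$, not $L(\lambda)$) cannot decide this.

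Second, the heart of the classification --- showing, for every exceptional type, every fundamental weight outside the stated list, and every characteristic, that some non-zero weight of $L(\omega_i)$ has multiplicity $\geq 2$ --- is explicitly deferred in your write-up (``this is where the bulk of the work lies \dots via L\"ubeck's tables or Jantzen's sum formula''). Until those checks are carried out, the exhaustiveness of the list is not established; and note that your concluding sentence ``the proof concludes by invoking the cited result in \cite{TZ2}'' is circular, since the statement you are proving \emph{is} the result taken from \cite{TZ2}. To make this complete you must either perform the case-by-case multiplicity verification (which is in effect what \cite{TZ2} supplies, for all simple types) or simply cite \cite{TZ2}, as the paper does.
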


 \begin{theo}\label{id5} 
 Let $G$ be a simple algebraic group of exceptional type and  
let  $1\ne u\in G$ be a unipotent element of order $p$. Let $M$ be an \ir $FG$-module.
Then one of the following holds:
\begin{enumerate}[]
\item{\rm (a)} ${\rm Jor}_M(u)$  contains at least two non-trivial blocks;
\item{\rm (b)} $G=G_2$, $\omega=p^a\omega_1$, $p\geq 7$, $u$ is regular and ${\rm Jor}_M(u)$ has precisely one block;
\item{\rm (c)} $G=G_2,p=3$ and $u$ lies in the class labelled $A_1^{(3)}$. 
\end{enumerate}
\end{theo}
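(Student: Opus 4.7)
The plan is to assume $M$ is non-trivial and that ${\rm Jor}_M(u)$ has at most one non-trivial block (so we are outside case (a)), and show that we must be in case (b) or case (c). If we are not in case (c), then since $|u|=p$, Theorem~\ref{fr55} applies and yields that every non-zero weight of $M$ with respect to a fixed maximal torus $T$ has multiplicity one. As $G$ is of exceptional type and in particular not of type $A_1$, Corollary~\ref{c21} forces $M$ to be tensor-indecomposable. Lemma~\ref{fr5} then restricts the highest weight $\omega$ of $M$ to the explicit short list: $(G,\omega)$ is one of $(E_6,p^a\omega_i)$ with $i\in\{1,2,6\}$, $(E_7,p^a\omega_j)$ with $j\in\{1,7\}$, $(E_8,p^a\omega_8)$, $(F_4,p^a\omega_k)$ with $k\in\{1,4\}$, or $(G_2,p^a\omega_l)$ with $l\in\{1,2\}$, for some $a\geq 0$.

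Next, I would reduce to the case $a=0$ by a Frobenius-twist argument. For $\omega=p^a\omega_i$ with $\omega_i$ a fundamental (hence $p$-restricted) weight, we have $L(\omega)\cong L(\omega_i)^{(a)}$, where $(a)$ denotes the $a$-fold Frobenius twist. Since $u$ has order $p$, it is conjugate to an element of $G(\mathbb{F}_p)$, so we may assume $F^a(u)=u$; then the Jordan form of $u$ on $L(p^a\omega_i)$ coincides with that on $L(\omega_i)$. This reduces the problem to verifying the statement on finitely many irreducible modules, namely the $L(\omega_i)$ indexed above.

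The final step is a case-by-case verification: for each of the ten pairs $(G,\omega_i)$ in the list and each unipotent conjugacy class $u$ of order $p$ in $G$, one reads off ${\rm Jor}_M(u)$ from Lawther's tables \cite{La} and checks that, apart from the configuration $G=G_2$, $\omega=\omega_1$, $u$ regular, $p\geq 7$, at least two non-trivial Jordan blocks always occur. In the exceptional configuration, $L(\omega_1)$ is the $7$-dimensional natural module for $G_2$, the regular unipotent class has order $p$ precisely when $p$ exceeds the Coxeter number $6$, and the regular element is known to act as a single Jordan block $J_7$: this is case (b). All other entries (including non-regular unipotent classes of order $p$ in $G_2$ on the $7$-dimensional module, every unipotent class on the $14$-dimensional adjoint module of $G_2$, and every unipotent class of order $p$ in the listed representations of $F_4$, $E_6$, $E_7$, $E_8$) split into at least two non-trivial blocks, giving case (a).

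The main obstacle is this concluding case check. Although the list of representations is short, one must carefully track the dependence of the module structure on the characteristic (e.g.\ the natural $G_2$-module becomes $6$-dimensional in characteristic $2$; the adjoint module of $G_2$ becomes reducible in characteristic $3$, which is precisely why the class $A_1^{(3)}$ must be excluded and instead shows up in case (c); the minuscule $F_4$-module drops from dimension $26$ to $25$ in characteristic $3$; and so on), and one must identify which unipotent classes actually have order $p$ in each group (for instance, the regular class in $G_2$ has order $p$ only for $p\geq 7$, explaining the precise bound in case (b)). Once this bookkeeping is in place, each individual verification from Lawther's tables is routine.
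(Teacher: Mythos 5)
Your argument is correct and follows essentially the same route as the paper: exclude case (c) so that Theorem~\ref{fr55} applies, use Corollary~\ref{c21} to get tensor-indecomposability, invoke Lemma~\ref{fr5} to reduce to the short list of highest weights, and then read off the Jordan block structure from Lawther's tables, with your Frobenius-twist reduction to $a=0$ merely making explicit what the paper leaves implicit. (Only a cosmetic remark: the class $A_1^{(3)}$ is set aside because its elements lie in no $A_1$-subgroup, i.e.\ it is the stated exception in Theorem~\ref{fr55}, not because of the reducibility of the adjoint module at $p=3$; your main argument already treats it correctly.)
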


\begin{proof}  By Corollary~\ref{c21}, $M$ is tensor indecomposable. As $|u|=p$, Theorem \ref{fr55} leaves us 
with inspection of the unipotent block structure for the  cases
listed in Lemma \ref{fr5}, unless  we are in the situation of (c) above.
The Jordan block structure of all unipotent elements in the representations  listed in Lemma \ref{fr5} has been computed by Lawther \cite{La}. We see that either (a) or (b) holds.
(Note that the case $|u|=2$ can be deduced from classification   of irreducible linear  groups generated by transvections, see for instance \cite{Po}.)
 \end{proof}

\noindent{\bf{Remark.}}   Under the assumptions of Theorem~\ref{id5} and assuming in addition that 
$G\neq G_2$, a result of Suprunenko \cite[Theorem 1]{S05} allows one to reduce the 
problem under discussion to an analysis of modules $M$ of dimension at most $4(l+3)$ and a small list
of further exceptions, most of which can be handled using the Tables of \cite{La}.
However, the proof of Suprunenko's result 
announced in 2005 has not been yet published, so we prefer to avoid using it. In addition, 
the method based on Theorem \ref{fr55} can also lead to an alternative proof of a similar theorem for 
classical algebraic groups.

 \begin{comment}
\begin{lemma}\label{up1}  Let $P$ be a parabolic subgroup  of $G$ with Levi subgroup $L$. 
Suppose  $L'$  has roots of the distinct  lengths if so is $G$.  Let $u$ be a unipotent element of G.  Then there is a conjugate of u
that does not lie in $R_u(P)$, the unipotent radical of P.
\end{lemma} 

\begin{proof}  Recall that the Weyl group permutes transitively the root subgroups of roots of the same length. Then every root subgroup is conjugate to one in $L'$.
This implies the claim in the case where $u$ is contained in a root subgroup. In general, express $u$ as a product of  elements
$u_i$, say, which lie in distinct simple root subgroups, and assume that   distinct $u_i$ are in distinct simple root subgroups.  Then we can assume that $u_1\in L'$,
and hence $u_1\notin R_u(P)$. We may assume that $P$ is a standard parabolic, that is, contains a maximal torus and all positive root subgroups, and $L'$ is generated by some of the above root subgroups.  Then either $u_i\in L'$ or $u_i\in R_u(P)$.
This implies the  lemma. \end{proof} 
  \end{comment}
  
  \medskip

We wish to extend Theorem \ref{id5} to the case $|u|>p$. 
For this we use induction. From Lemma  \ref{tr8} we get the following

\begin{lemma}\label{cv6}  
Let $P$ be  a parabolic subgroup in $G$ with Levi factor $L$ and
unipotent radical $U.$ Let $u\in P$ be unipotent and let $u'$ denote the
projection of $u$ into $L$. Let $V$ be an \ir $FG$-module and
$0=V_0\subset V_1\subset V_2\subset \cdots\subset V_k=V  $ be a
socle series  of $V|_P$ (that is, $U$ acts trivially on the factors
$V_i/V_{i-1}$). Suppose that the Jordan form of $u$ on $V$ has at most
one block of  size greater than $1$. Then the same holds for the Jordan form of
$u'$ on every factor $V_i/V_{i-1}$.
\end{lemma}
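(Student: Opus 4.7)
The plan is to combine Lemma~\ref{tr8} with the semidirect product structure of $P$ afforded by the Levi decomposition, and the result will follow essentially as a one-step corollary.

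First I would record the relevant consequence of the Levi decomposition $P = L \ltimes U$: every $u \in P$ admits a unique factorization $u = u'v$ with $u' \in L$ and $v \in U$, where $u'$ is precisely the projection of $u$ into $L$ used in the statement. Since by hypothesis $U$ acts trivially on each successive quotient $V_i/V_{i-1}$ of the socle series, the action of $v$ on $V_i/V_{i-1}$ is trivial, and hence $u$ and $u'$ act identically on this quotient. In particular, the Jordan normal forms ${\rm Jor}_{V_i/V_{i-1}}(u)$ and ${\rm Jor}_{V_i/V_{i-1}}(u')$ coincide.

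Next, I would invoke Lemma~\ref{tr8} applied to the pair of $FG$-submodules $V_{i-1} \subset V_i$ of $V$ (these are $P$-stable, hence in particular $\langle u \rangle$-stable, so the lemma applies). Since ${\rm Jor}_V(u)$ contains at most one non-trivial block, the same holds for ${\rm Jor}_{V_i/V_{i-1}}(u)$. Combined with the previous step, this gives that ${\rm Jor}_{V_i/V_{i-1}}(u')$ contains at most one non-trivial block, which is the desired conclusion.

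There is no real obstacle here: the argument is purely formal, and the two ingredients (Levi decomposition together with the trivial action of $U$ on socle factors, and Lemma~\ref{tr8} for subquotients) dovetail immediately. The only point requiring a moment's care is the identification of the $u$-action with the $u'$-action on each $V_i/V_{i-1}$, which rests on the unique factorization in $P = L \ltimes U$ and the hypothesis on the socle series.
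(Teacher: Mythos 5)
Your proof is correct and follows essentially the same route as the paper, which simply derives the lemma from Lemma~\ref{tr8} together with the observation that $u$ and its Levi projection $u'$ act identically on each quotient $V_i/V_{i-1}$ because the unipotent radical acts trivially there. The only slip is calling $V_{i-1}\subset V_i$ ``$FG$-submodules'' (they are only $FP$-submodules of $V|_P$), but your parenthetical remark that $\langle u\rangle$-stability suffices for Lemma~\ref{tr8} already repairs this.
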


\begin{lemma}\label{as1}  Let $G$ be a simple algebraic group in characteristic  $ p>0$, 
$P$ a parabolic subgroup and $U=R_u(P)$  the unipotent radical of $P$. If $U$ is abelian then the exponent of $U$
 equals  
$p$. 
If  $U$ is nilpotent of class $2$ then $U$ is of exponent $p$ or $4$. 

%Assume $p>2$ for $G$ of type $B_n,C_n,F_4$ and $p>3$ for $G_2$.
%Then U is of exponent p
\end{lemma}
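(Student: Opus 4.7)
My plan is to exploit the description of $U$ in terms of root subgroups, combined with the Hall collection formula in nilpotency class at most $2$. I would begin by recalling that $U$ is generated by the root subgroups $U_\alpha = \{x_\alpha(t) : t \in F\}$ for $\alpha$ ranging over the positive roots outside the root system of a Levi factor of $P$. Each $U_\alpha$ is isomorphic as an algebraic group to the additive group $(F,+)$, so every element satisfies $x_\alpha(t)^p = 1$. As a variety, $U = \prod_\alpha U_\alpha$ for any fixed ordering of the roots, and so every $u \in U$ can be written as a product $u = \prod_\alpha x_\alpha(t_\alpha)$.

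If $U$ is abelian, the factors $x_\alpha(t_\alpha)$ commute pairwise, so $u^p = \prod_\alpha x_\alpha(t_\alpha)^p = 1$, immediately giving exponent $p$.

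If $U$ has nilpotency class $2$, then $[U,U] \le Z(U)$, in particular $[U,U]$ is abelian. The Chevalley commutator formula $[U_\alpha, U_\beta] \subseteq \langle U_{i\alpha + j\beta} : i,j \ge 1 \rangle$ shows that $[U,U]$ is generated by root subgroups, each of exponent $p$; being abelian, $[U,U]$ itself has exponent $p$. Hall's collection identity in class $2$ then yields
$$u^n = \Bigl(\prod_i x_i\Bigr)^n = \prod_i x_i^n \cdot \prod_{i<j}[x_j,x_i]^{\binom{n}{2}},$$
where the $x_i$ are the root subgroup factors of $u$. Taking $n = p$: for odd $p$, $\binom{p}{2} = p(p-1)/2$ is divisible by $p$ while each commutator has order dividing $p$, so $u^p = 1$; for $p = 2$, all $x_i^2 = 1$ give $u^2 = \prod_{i<j}[x_j, x_i] \in [U,U]$, which has order dividing $2$, hence $u^4 = 1$.

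The main technical input is Hall's identity in class $2$, which I would justify by first establishing $(xy)^n = x^n y^n [y,x]^{\binom{n}{2}}$ by induction on $n$ using the centrality of $[y,x]$, and then iterating over the number of factors (straightforward because every intermediate commutator lies in the central subgroup $[U,U]$). Combined with the Chevalley commutator formula controlling $[U,U]$, both parts of the statement follow directly.
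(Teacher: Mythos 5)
Your proof follows essentially the same route as the paper's: both reduce to the generation of $U$ by root subgroups of exponent $p$ and then use the class-$2$ power identity (the paper uses $(xu)^p=x^pu^p[u,x]^{p(p-1)/2}$, of which your iterated Hall formula is just the many-factor version) together with the fact that $[U,U]$ has exponent $p$, giving exponent $p$ for odd $p$ and exponent $4$ for $p=2$. The one step you should tighten is the assertion that the Chevalley commutator formula ``shows that $[U,U]$ is generated by root subgroups'': the formula only gives the containment $[U_\alpha,U_\beta]\subseteq\langle U_{i\alpha+j\beta}\rangle$, and containment in a subgroup generated by elements of order $p$ does not by itself bound the exponent of an abelian subgroup (think of the cyclic subgroup of order $4$ inside a dihedral group of order $8$ generated by two reflections). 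The fact you need is true and follows from tools you already have: in class $2$ the commutator map is bilinear, so $[U,U]$ is generated by the central elements $[x_\alpha(t),x_\beta(s)]$, and $[x_\alpha(t),x_\beta(s)]^p=[x_\alpha(t),x_\beta(s)^p]=1$, so the abelian group $[U,U]$ has exponent $p$; alternatively, $[U,U]$ is a closed connected $T$-stable subgroup of $U$ and hence genuinely is a product of root subgroups. The paper reaches the same conclusion by observing that $u\mapsto[x,u]$ is a homomorphism $U\to U'$ killing $U'$, hence factoring through $U/U'$, which has exponent $p$.
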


\begin{proof} It is well known that $U$ is generated by root subgroups $U_\al$ for some sets of roots $\al$, and 
each $U_\al$ is an abelian group of exponent $p$. This implies the statement if   $U$ is abelian.   Otherwise,  
let $U'$ be the derived subgroup of $U$. Then for every $x\in U$ the mapping    $u\ra [x,u]$ ($u\in U$) yields a 
group homomorphism $U\ra U'$. Now $U'$ is of exponent $p$ as so is $U/U'$. If  $p=2$ then  $U$ is of exponent 4.
If $p>2$ then
$(xu)^p=x^pu^p[u,x]^{p(p-1)/2}=x^pu^p$ as $[u,x]^{p(p-1)/2}=1$.   This easily implies  the lemma. \end{proof} 
%n the result follows from \cite[Theorem 1]{abs}.  

\medskip
\noindent{\bf{Remark.}} Below we will apply Lemma \ref{as1} to groups $G$ of type $E_6$ or $F_4$,  and  to the maximal parabolic
subgroups $P$ corresponding to nodes $1,6$ for $E_6$ and $1,4$ for $F_4$.  Then $U$ is of nilpotency class 2.
This follows from the description of $U$ in \cite[4.4]{cks} for node 1 of $E_6$, and for node 6 this
follows too as the graph automorphism of $E_6$ permutes the nodes 1,6. For $G=F_4$ this similarly follows from 
 \cite[4.5]{cks} for node 1, and for node 4 from data at \cite[p. 19]{cks}. (More precisely, 
 $U$ is generated by the root subgroups $U_\al$, where $\al$ runs over  positive roots whose expression in terms of simple roots contains $\al_4$, and $U$ contains a normal subgroup $R$ generated by $U_\al$
 for $\al$  such that the root $\al_4$ occurs in such expressions with coefficient 2. As no positive root of $F_4$
 has $\al_4$-coefficient greater than 2, the claim follows from \cite[4.8(i)]{cks}.)

\begin{lemma}\label{a44}  Let G be a simple exceptional algebraic group of rank l in defining characteristic $p$, 
and $u\in G$ unipotent.
Let $M$ be an \ir   G-module such that ${\rm Jor}_M(u)$ contains a single non-trivial block. 
%and  $M'$ such that g is almost cyclic 
Let $k\geq 0$ be an integer such that $|u^{p^k}|=p$. Then $\dim M\leq (p-1)p^k(l+3)$, unless possibly $G$ is of 
type $F_4$, $p=2$, where $\dim M\leq 2^{k+3}$. 

% $g^4=1$.
%Let $M$ be an \ir G-module such that g is almost cyclic on M. Then $\dim M\leq 2(l+3)$, except  $G=F_4$ where $\dim M\leq 16$.  Moreover, if $G\neq G_2$ is of exceptional type then $\dim M=1$.
\end{lemma}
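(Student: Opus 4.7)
The plan is to apply Lemma~\ref{gb3} to the element $v := u^{p^k}$, which by hypothesis has order $p$ and hence is a non-trivial unipotent element of $G$. Since the kernel of the representation $G \to \GL(M)$ is contained in $Z(G)$, which is semisimple, $v$ must act non-trivially on $M$. Let $n$ denote the size of the single non-trivial Jordan block of $u$ on $M$; then $u$ acts with order exactly $p^{k+1}$ on $M$, so $p^k < n \leq p^{k+1}$ (the lower bound because $v$ acts non-trivially, the upper bound since the order of $J_n$ in characteristic $p$ is $p^{\lceil \log_p n \rceil}$).

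I would then compute $d := \dim(\Id - v)M$. On the non-trivial Jordan block of $u$, the characteristic-$p$ binomial identity gives $v = \Id + (u - \Id)^{p^k}$, so $\Id - v$ has rank exactly $n - p^k$ there; and $\Id - v$ vanishes on each trivial block of $u$. Therefore $d = n - p^k \leq p^{k+1} - p^k = (p-1)p^k$. Granting that Lemma~\ref{gb3} may be invoked for $v$, we obtain $\dim M \leq (l+3)\,d \leq (p-1)p^k(l+3)$; in the exceptional situation $G = F_4$ with $p = 2$, where $v^2 = 1$, the bound instead becomes $\dim M \leq 8\,d \leq 8 \cdot 2^k = 2^{k+3}$, exactly as claimed.

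The remaining issue is that Lemma~\ref{gb3} is phrased for finite exceptional groups of Lie type. To bridge this, I would restrict $M$ to a subgroup $G(\FF_q)$ with $q = p^s$ chosen large enough that the highest weight of $M$ is $q$-restricted and that (after a $G$-conjugation which leaves $d$ and the Jordan structure unchanged) $v \in G(\FF_q)$; Steinberg's tensor product theorem then guarantees that $M|_{G(\FF_q)}$ remains irreducible. Since $Z(G(\FF_q))$ has order coprime to $p$, the element $v$ is non-central in $G(\FF_q)$, so Lemma~\ref{gb3} applies to $G(\FF_q)$ acting irreducibly on $M$ and delivers the stated dimension bound. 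The only real subtlety lies in this finite-group reduction; the Jordan-block rank computation itself is a direct consequence of the binomial expansion in characteristic $p$.
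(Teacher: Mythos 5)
Your argument is correct and follows essentially the same route as the paper's proof: bound $\dim(\Id-u^{p^k})M$ by $(p-1)p^k$ using the single non-trivial block hypothesis, then apply Lemma~\ref{gb3} to $u^{p^k}$ viewed inside a finite exceptional group of Lie type over which $M$ remains irreducible. Your exact rank computation $d=n-p^k$ and the explicit Steinberg-restriction/conjugation justification simply spell out details that the paper asserts without comment (it merely states the existence of a suitable finite subgroup $G_1$ with $u\in G_1$ and $M|_{G_1}$ irreducible).
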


\begin{proof} %Let $G_1$ be a finite group of  the same type as $G$ containing $g$.
As ${\rm Jor}_M(u)$ contains a single non-trivial block, it follows that ${\rm Jor}_M(u^{p^k})$ contains at most $p^k$ 
 non-trivial blocks, each of size at most $p$. Therefore, $\dim (\Id -u^{p^k})M\leq (p-1)p^{k}$. There is a finite 
group $G_1\subset G$ of Lie type such that $u\in G_1$ and $M$ is an \ir $FG_1$-module. So the result follows by
 applying Lemma  \ref{gb3} to $u^{p^k}$.\end{proof} 

%\medskip
Example. If $|u|\leq 4$ then $\dim M\leq 2(l+3)$ or $16$ for $F_4$. If $|u|=9$ then $\dim M\leq 6(l+3)$.

\begin{lemma}\label{fe6}  Let G be of type  $E_6,E_7,E_8$ or $F_4$, and let $1\neq u\in G$ be a unipotent element.
 Let $V$ be an \ir G-module  such that $\dim V>1$. Then ${\rm Jor}_V(u)$ has at least two   non-trivial blocks.    
\end{lemma}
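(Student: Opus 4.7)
The plan is to induct on the rank of $G$ among the types $E_6, E_7, E_8, F_4$, with a base case organized by the order of $u$. If $|u|=p$, then Theorem~\ref{id5} applies directly: since $G \ne G_2$, the exceptional cases (b) and (c) of that theorem are ruled out, so (a) must hold, giving the desired conclusion. Hence we may assume $|u|>p$ and, towards a contradiction, that ${\rm Jor}_V(u)$ contains exactly one non-trivial block.

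The next step is parabolic reduction. I would choose a maximal parabolic $P\subsetneq G$ whose unipotent radical has exponent strictly less than $|u|$, so that a conjugate of $u$ in $P$ has non-trivial projection $u'$ to the Levi $L$. For $G=E_6$ take the parabolic corresponding to node $1$ or $6$, and for $G=F_4$ the one corresponding to node $1$ or $4$, as in the remark after Lemma~\ref{as1}; in each case $R_u(P)$ is $2$-step nilpotent. For $G=E_7$ take $P_7$ with Levi $E_6\cdot T_1$, whose unipotent radical is abelian since $\alpha_7$ has coefficient $1$ in the highest root. For $G=E_8$ take $P_8$ with Levi $E_7\cdot T_1$, whose unipotent radical is $2$-step nilpotent since $\alpha_8$ has coefficient $2$. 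By Lemma~\ref{as1}, in every case $R_u(P)$ has exponent $p$ (or $4$ when $p=2$). Hence, provided $|u|>p$ (resp.\ $|u|>4$ when $p=2$), after conjugating $u$ into $P$ its projection $u'$ to $L$ is a non-trivial unipotent element.

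Applying Lemma~\ref{cv6}, every composition factor $W$ of $V|_P$ satisfies ${\rm Jor}_W(u')$ has at most one non-trivial block. Since $G$ is simple, the normal closure in $G$ of $L':=[L,L]$ equals $G$, so if $L'$ acted trivially on every composition factor of $V|_L$ it would act unipotently, and being semisimple then trivially, on $V$; but then $G$ would act trivially on $V$, contradicting $\dim V>1$. Thus some composition factor $W$ has $L'$ acting non-trivially, and since $u'$ is unipotent it lies in $L'$ and acts non-trivially on $W$. In the chosen parabolics, $L'$ is simple of type $E_7$ (for $G=E_8$), $E_6$ (for $G=E_7$), $D_5$ (for $G=E_6$), or $C_3$ or $B_3$ (for $G=F_4$). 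The first two cases are handled by the inductive hypothesis, the last two by Suprunenko's results for classical groups (\cite{S2,S}); either way we obtain a contradiction.

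What remains is the case $p=2$, $|u|=4$, which falls outside the reach of the parabolic reduction. Here Lemma~\ref{a44} forces $\dim V\leq 2(l+3)$ (or $\dim V\leq 16$ when $G=F_4$), so, using Corollary~\ref{c21} to restrict to tensor-indecomposable modules, one is left with a very short explicit list of candidates that can be settled case by case from Lawther's tables in \cite{La}. I expect the main obstacle to lie precisely in this residual bookkeeping: not in the parabolic reduction itself, but in carefully verifying the small-rank low-order cases (especially in type $F_4$ when $p=2$), and in checking that the semisimple projection $u'$ really lands non-trivially in the targeted simple factor of $L'$ so that the inductive hypothesis or the classical-group input applies without further complications from the central torus factor of $L$.
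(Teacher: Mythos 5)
Your skeleton matches the paper's: Theorem~\ref{id5} disposes of $|u|=p$; the case $p=2$, $|u|=4$ is killed by the bound of Lemma~\ref{a44} (in fact the residual list you anticipate is empty, since $2(l+3)\leq 22$ and $16$ lie below the minimal degrees $26,27,56,248$ of non-trivial irreducibles of $F_4,E_6,E_7,E_8$); and for $|u|>2p$ one conjugates $u$ into a suitable maximal parabolic, uses Lemma~\ref{as1} to see $u\notin R_u(P)$, and Lemma~\ref{cv6} to pass to the Levi. Your inductive step for $E_7$ and $E_8$ is sound and is essentially what the paper does: any non-trivial composition factor of $V$ restricted to the $E_6$- (resp.\ $E_7$-) Levi, together with the non-trivial unipotent projection $u'$, contradicts the lower-rank case.

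The genuine gap is at the base of your induction, $G=E_6$ and $G=F_4$, where the Levi derived groups are classical ($D_5$, resp.\ $B_3$ or $C_3$). You claim that Suprunenko's theorems \cite{S2,S} then yield a contradiction outright. They do not: the relevant Theorem 3 only asserts that a non-trivial irreducible module of a classical group on which a unipotent element has at most one non-trivial Jordan block must have highest weight $p^m\omega_1$, i.e.\ be a Frobenius twist of the natural module, and such configurations really occur (for instance a unipotent element of Jordan type $(3,1^7)$ in $D_5$, or a transvection in $C_3$, on the natural module). So the mere existence of a non-trivial $L'$-composition factor gives no contradiction, and this is precisely where the bulk of the paper's argument lies: one applies Smith's theorem to the restrictions to \emph{both} Levi subgroups ($P_1$ and $P_6$ for $E_6$, $P_1$ and $P_4$ for $F_4$) to force almost all coefficients of $\omega$ to vanish, concluding $\omega\in\{\omega_1,\omega_6,\omega_1+\omega_6\}$, resp.\ $\{\omega_1,\omega_4,\omega_1+\omega_4\}$; then $\omega_1,\omega_6$ (resp.\ $\omega_1,\omega_4$) are eliminated by Lawther's tables \cite{La}; $\omega_1+\omega_6$ is eliminated by exhibiting a further Levi composition factor of highest weight $\lambda_1+\lambda_4$, again contradicting Suprunenko; and $\omega_1+\omega_4$ for $F_4$ requires Steinberg's tensor product theorem when $p=2$ and, when $p>2$, a dimension estimate ($\dim V_{\omega_1+\omega_4}=1053$ against the bound $7\cdot 10\cdot 11=770$ from Lemma~\ref{a44}). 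None of this residual analysis appears in your sketch, and it is not bookkeeping of the $p=2$, $|u|=4$ kind—it is the core of the proof; moreover your single-parabolic choice would not suffice for it, since both parabolics of each of $E_6$ and $F_4$ are needed to constrain all coefficients of $\omega$.
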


\begin{proof} By Lemma \ref{ae3}, we can assume that $V=V_\om$ is tensor-indecomposable, and hence 
without loss of generality the highest 
weight $\om$ of $V$ is $p$-restricted. In view of Theorem \ref{id5}  we can assume that $|u|>p$. If $|u|=4$
 then $\dim M\leq 22$ (see Example following Lemma \ref{a44});  however $G$ is well known to have no 
non-trivial \irr of degree less than 25. So we can assume that $|u|>2p$. Suppose the contrary, that 
${\rm Jor}_V(u)$ has a single non-trivial block.

Suppose first that $G$ is of type $E_6$. Let $P_i$, $i=1,6$, be a maximal parabolic of $G$ corresponding to nodes 
1, respectively 6, of the Dynkin diagram of $G$. Let $L_i$ be a Levi subgroup of $P_i$ and $L_i'$ the derived subgroup of $L_i$. Then $L'_i$ is a simple group of type $D_5$. 
By Lemma \ref{as1},  $u\notin  R_u(P)$.  Let $u'$ be the projection of $u$ into $L_i$. Then
 $1\neq u'\in L'_i$.  By Lemma \ref{cv6}, the  Jordan form of $u'$ has at most one  non-trivial block on
  every  composition factor of the restriction of $V$ to $L'_i$.
 Let $\lam$ be the   highest weight of a non-trivial composition factor.
  
Recall that    $L_1'$ is generated by the root subgroups $U_{\pm \al_i}$ with $i\in\{2, 3, 4,5,6 \}$, and $L_2'$ is 
generated by the  root subgroups $U_{\pm \al_i}$ with $i\in\{1,2,3,4,5\}$.
Let $\om=\sum a_i\om_i$, where $\om_1\ld \om_6$ are the fundamental weights of $E_6$ and $0\leq a_i<p$
for $i=1\ld 6$. Let $\lam_1\ld \lam_5$ be the fundamental weights of $D_5$. By Smith's theorem \cite{Sm}, 
the restriction of $V$ to $L_1'$ contains a composition factor
of highest weight $\sum_{i=1}^5a_i\lam_i$, and    the restriction of $V$ to $L_2'$ has a composition factor
of highest weight $ a_6\lam_1+a_5\lam_2+a_4\lam_3+a_3\lam_4+a_2\lam_5$. (The root ordering for $L_2'\cong D_5$ is 
inverse of that in $L_1'$.) By a result of Suprunenko \cite[Theorem 3]{S} for $p>2$ and \cite[Theorem 3]{S2} 
for $p=2$,  
 $\lam= p^m\om_1$ for some integer $m>0$. Applying this to $L_1'$, we obtain 
$a_1\leq 1$, $a_2=\cdots =a_5=0$, applying to $L_2'$, we get $a_6\leq 1$.  
So we  are left to examine the cases where $\om\in\{\om_1,\om_6,\om_1+\om_6\}$. By Lawther \cite[p. 4136]{La}, 
${\rm Jor}_V(u)$ has at least two non-trivial blocks for   $\om=\om_1$ and $\om_6$.

Let $\om=\om_1+\om_6$. Then $\omega-\alpha_1$, respectively $\omega-\alpha_6$, affords the highest weight of an 
$FL_i$-composition factor for $i=1$, resp. $i=2$,  with highest weight $\lambda_1+\lambda_4$.
%consider a parabolic subgroup $P_2$ with Levi subgroup $L_2'$ of type $A_5$
%generated by the root subgroups corresponding to the roots $\al_1,\al_3,\al_4,\al_5,\al_6$. Then, by Smith's theorem, \emph{loc.cit.}, the restriction of $V$ to $L_2'$ contains a composition factor
%with highest weight $\mu_1+\mu_5$, where $\mu_1\ld \mu_5$ are fundamental weights of $A_5$.
This again contradicts \cite[Theorem 3]{S} for $p>2$ and \cite[Theorem 3]{S2} for $p=2$.

If $G$ is of type $E_7$ then $G$ has a parabolic subgroup $P$ whose Levi factor $L'$ is  
of type $E_6$, and $R_u(P)$
is abelian (\cite[4.4]{cks}), and hence of exponent $p$.  As above we deduce that $u\not\in R_u(P)$. 
If $\omega=\omega_1$, $\omega$ affords an $L'$-composition factor of highest weight $\omega_1$ for $L'=E_6$, and 
if $\omega=\omega_7$, the weight $\omega-\alpha_7$ affords an $L'$-composition factor which is again one of the 
27-dimensional irreducible $E_6$-modules. But this contradicts the conclusion of the previous paragraph.  
In an entirely similar way, and using \cite[4.4]{cks}, the case $G=E_8$ follows from that for $E_7$.

Let $G=F_4$, 
and let $\om=\sum a_i\om_i$, where $\om_1\ld \om_4$ are the fundamental weights of $F_4$. Let $P_i$, $i=1,4$, be  a
 maximal parabolic of $G$ corresponding to nodes 1 or 4 of the Dynkin diagram of $G$. Let $L_i$ be a Levi subgroup 
of $P_i$ and $L_i'$ the derived subgroup of $L_i$. Then 
$L_4'$ is  simple of type $B_3$, and $L_1'$ is  simple of type $C_3$. Let  $\lam_1,\lam_2, \lam_3$ be the 
fundamental weights of $L_4'$ and $ \mu_1,\mu_2, \mu_3$  the fundamental weights of $L_1'$.
As  above,  by  Smith's theorem, the restriction of $V$ to $L_4'$ contains a composition factor
with highest weight $a_1\lam_1+a_2\lam_2+a_3\lam_3$, and    the restriction of $V$ to $L_1'$ has a composition factor
with highest weight $ a_4\mu_1+a_3\mu_2+a_2\mu_3$. Applying \cite[Theorem 3]{S}
to $L_4'$, we get $a_1=1, a_2=a_3=0$; applying to $L_2'$ \cite[Theorem 3]{S} for $p>2$ and \cite[Theorem 3]{S2} for 
$p=2$,
 we get 
 $ a_4\leq 1$. 
So we have to examine the cases $\om\in\{\om_1,\om_4, \om_1+\om_4\}$.  
By Lawther \cite[p. 4134, 4135]{La}, ${\rm Jor}_V(u)$ has at least two non-trivial blocks for   
$\om=\om_1$ and $\om_4$. If  $p=2$  and $\om=\om_1+\om_4$ then $V$ is tensor-decomposable (see \cite[Corollary
of Theorem 41]{St}), so   
${\rm Jor}_V(u)$ has at most two non-trivial blocks by Lemma \ref{c21}.

Let $\om=\om_1+\om_4$ and $p>2$. By L\"ubeck \cite{Lu}, $\dim V_{\om}=1053$. 
As $|u|>p>2$, we have $|u|\leq  3^3,5^2,7^2,11^2$ for $p=3,5,7,11$ respectively, see \cite{La}.  By Lemma \ref{a44},
we have $\dim M\leq 7\cdot 10\cdot 11=770$. This is a contradiction.  
 \end{proof}

%\medskip
%DONNA' TEXT: 
Now we  consider the remaining cases for the group $G =
G_2$. So from now on we assume $G =
G_2$, that is,
the unipotent elements which are either of order greater than $p$
or the one class of elements of order $3$ for $p=3$ which do not lie in any
$A_1$-type subgroup \cite[Theorem 5.1]{PST}. We fix a maximal torus $T$ of $G$ and root subgroups with respect to $T$. For all roots $\alpha$, let $x_\alpha:{\bf G}_a\to G$ be an isomorphism whose image is the $T$-root subgroup $U_\alpha$ corresponding to $\alpha$.
By \cite{La}, for example, we are left to consider the following:
\begin{enumerate}[a)]
\item $u$ is regular and hence conjugate to $x_{-\alpha_1}(1)x_{-\alpha_2}(1)$, $p\leq 5$, and
$u$ has order $p^2$ or 8. %+\delta_{2,p}}$.
\item $u$ is in the class $G_2(a_1)$, $p=2$, $u$ is conjugate to $x_{\alpha_2}(1)x_{3\alpha_1+\alpha_2}(1)$
and has order 4.
\item $u$ has order 3 %and %lies in the class $A_1^{(3)}$, 
and is conjugate to
$x_{2\alpha_1+\alpha_2}(1)x_{3\alpha_1+2\alpha_2}(1)$.
\end{enumerate}

We  note that the Jordan block structure of all unipotent
elements acting on the irreducible modules with highest weight
$\omega_1$, or $\om_2$ for $p\ne 3$, %, on the module $V_G(\omega_2)$,
is given in Lawther \cite{La}.  We use this to show:

\begin{lemma}\label{lawther_blocks} Let $1\ne u\in G=G_2$ be unipotent and let $V$ be one of the two %restricted
irreducible $FG$-modules with %fundamental dominant 
highest weight $\om_1$ or $\om_2$. 
Then ${\rm Jor}_V(u)$ has a single   
non-trivial block  % $u$ has a unique Jordan block of size greater than 1 on $V$
if and only if one of the following holds:
%\begin{enumerate}\item 

$(1)$ $u$ is regular and $\om=\omega_1$.

$(2)$ $u$ is regular, $p=3$ and $\om = \omega_2$.
%\end{enumerate}
\end{lemma}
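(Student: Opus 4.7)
The plan is a direct finite verification using the tables of Lawther~\cite{La}, which list the Jordan block partition of every unipotent class of $G=G_2$ acting on each of the irreducible modules $V(\omega_1)$ and $V(\omega_2)$ in every characteristic. Since the lemma is an ``if and only if'' statement ranging over finitely many unipotent classes (the standard four $A_1$, $\tilde A_1$, $G_2(a_1)$, $G_2$ in good characteristic, together with the extra bad-characteristic classes enumerated in the cases (a)--(c) just preceding the lemma), the argument is essentially a table look-up combined with bookkeeping about the module dimensions.

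For the ``if'' direction, I would first recall that $\dim V(\omega_1)=7$ when $p\ne 2$ and $6$ when $p=2$, whereas $\dim V(\omega_2)=14$ when $p\ne 3$ and $7$ when $p=3$. Then, reading from \cite{La}: when $u$ is regular and $V=V(\omega_1)$, the matrix $\phi(u)$ is a single Jordan block of size $\dim V$, giving one non-trivial block and so case~(1); when $u$ is regular, $p=3$ and $V=V(\omega_2)$, the module is $7$-dimensional and $\phi(u)$ is again a single Jordan block of size~$7$, giving case~(2).

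For the ``only if'' direction I would exhaust the remaining pairs $(u,V)$. For $V=V(\omega_1)$, inspection of \cite{La} shows that every non-regular non-trivial unipotent class, in every characteristic, acts with at least two non-trivial Jordan blocks (typically one sees partitions like $(3,1^{4})$ or $(2^{2},1^{3})$ for root elements, and two or more parts of size $\ge 2$ for subregular classes). For $V=V(\omega_2)$, when $p\ne 3$ the module is $14$-dimensional and Lawther's tables record at least two non-trivial blocks for every non-trivial unipotent class, the regular class included, because the regular element has order at most $p^{2}$ with $p\le 5$ or order $8$ with $p=2$, so no single Jordan block of size $14$ can occur; when $p=3$ only the regular class produces a single block, and the other non-trivial classes of $G_2$ in characteristic $3$ (notably the exceptional class $A_1^{(3)}$ and the second class of order $3$ listed in case~(c) above) all yield partitions of $7$ with two or more parts $\ge 2$. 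The only genuine point requiring care is to make sure all the additional classes appearing in bad characteristics $p=2,3$ are accounted for; since each is explicitly tabulated in \cite{La}, no further structural argument is needed and no obstacle is anticipated beyond careful book-keeping.
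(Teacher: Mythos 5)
Your plan works for $V(\omega_1)$ (all $p$) and for $V(\omega_2)$ with $p\neq 3$, and there it coincides with the paper's argument: both directions of the equivalence are read off directly from \cite[Table 1]{La}. The gap is in the one remaining case, $V(\omega_2)$ with $p=3$. You assert that this case too is ``explicitly tabulated'' in \cite{La}, but it is not: for $p=3$ the irreducible module $L(\omega_2)$ is $7$-dimensional, and Lawther's tables record the action on $V(\omega_1)$ and on the $14$-dimensional Lie algebra, not on this $7$-dimensional irreducible quotient. The paper states this explicitly just before the lemma (``the Jordan block structure \dots with highest weight $\omega_1$, or $\omega_2$ for $p\ne 3$, is given in Lawther''), so your ``only if'' claim for $p=3$, $\omega=\omega_2$ (that all non-regular classes give at least two non-trivial blocks) and your ``if'' claim (that the regular element acts as a single block $J_7$) are exactly the statements that cannot be obtained by table look-up and are the whole content needing proof.

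The paper closes this case with an idea absent from your proposal: the exceptional graph morphism of $G_2$ in characteristic $3$, which interchanges the roles of $\omega_1$ and $\omega_2$ (up to twist), so that an element has a single non-trivial block on $V(\omega_2)$ if and only if its image has a single non-trivial block on $V(\omega_1)$; since that image must then be regular by the already-settled $\omega_1$ case, and the graph morphism preserves regularity, the element itself is regular, and conversely. You would need either this reduction or an independent computation of the Jordan forms of all unipotent classes of $G_2$ on the $7$-dimensional module $L(\omega_2)$ in characteristic $3$; as written, your book-keeping step silently assumes data that the cited source does not contain.
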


\begin{proof} For the weight $\om=\om_2,p\neq 3$ and $\om=\om_1$
the statement follows directly from \cite[Table 1]{La}. % \emph{loc.cit}.
So it remains to consider the case of the irreducible module
$V_G(\omega_2)$, when $p=3$. We apply the exceptional graph
automorphism of $G$ and see that any element acting with only one
non-trivial Jordan block on $V_{\omega_2}$ must have image an
element acting with only one non-trivial Jordan block on
$V_{\omega_1}$. Then, by the above remarks, the image of the
element under the graph automorphism must be regular, which means
the element itself is regular. The result follows.
\end{proof}

\begin{propo}\label{gg22}  Let $G=G_2$ and let $u\in G$ be unipotent. Let V be an \ir
 G-module with highest weight $\om$. Then ${\rm Jor}_V(u)$ has a single %t most two non-trivial 
non-trivial block  %the Jordan form of $u$ on $V$ has exactly one block of size greater than one 
\ii
 $\om=p^k\om_1$ or $p=3$ and $\om=p^k\om_2$ for some integer $k\geq 0$. 
 
\end{propo}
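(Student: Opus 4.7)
The plan is to prove both implications by combining Steinberg's tensor product theorem, the dimension bound of Lemma \ref{a44}, and the class-by-class information of Lemma \ref{lawther_blocks}.

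For the backward direction, suppose $\omega = p^k\om_1$ (or $p=3$ and $\omega = p^k\om_2$). Then $V_\omega$ is the $k$-fold Frobenius twist of $V_{\om_1}$ (respectively $V_{\om_2}$): as a vector space it is $V_{\om_1}$, but with the $G$-action precomposed with the $p^k$-power Frobenius. Applying the Frobenius entrywise to matrices is a ring homomorphism on entries, so it commutes with matrix multiplication and preserves ranks of polynomial expressions in a matrix; consequently the ranks $\dim\ker(u-I)^j$ coincide on $V_\omega$ and on $V_{\om_1}$, and $u$ has the same Jordan block structure on both. Taking $u$ to be a regular unipotent element, Lemma \ref{lawther_blocks} produces a single non-trivial Jordan block.

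For the forward direction, assume ${\rm Jor}_V(u)$ has a single non-trivial block. By Corollary \ref{c21}, $V$ is tensor-indecomposable; Steinberg's tensor product theorem then gives $\omega = p^k\omega'$ for some $p$-restricted weight $\omega'$, and $V$ is the $k$-fold Frobenius twist of $V_{\omega'}$. The argument in the previous paragraph lets us replace $\omega$ by $\omega'$ and so we may assume $\omega$ is $p$-restricted. We now split according to $|u|$.

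If $|u|=p$, Theorem \ref{id5} forces either case (b), giving $\omega = p^a\om_1$ with $p\ge 7$ and hence $\omega = \om_1$ by $p$-restrictedness, or case (c), namely $p=3$ and $u\in A_1^{(3)}$. In the latter case Lemma \ref{a44} yields $\dim V\le (p-1)(l+3)=10$, and the table of dimensions of $3$-restricted irreducible $G_2$-modules (L\"ubeck \cite{Lu}) shows that the only non-trivial candidates are $V_{\om_1}$ and $V_{\om_2}$, both of dimension $7$.

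If $|u|>p$, the remaining possibilities are the three cases (a), (b), (c) listed after Lemma \ref{fe6}; case (c) coincides with $|u|=p=3$ already treated. In case (b), $u\in G_2(a_1)$ with $p=2$ and $|u|=4$, so Lemma \ref{a44} gives $\dim V\le 10$; the only $2$-restricted candidate is $V_{\om_1}$ of dimension $6$, but Lemma \ref{lawther_blocks} rules this out since $u$ is not regular, and case (b) contributes nothing. In case (a), $u$ is regular with $p\in\{2,3,5\}$ and the bounds from Lemma \ref{a44} are $20, 30, 100$ respectively. Enumerating $p$-restricted weights of bounded dimension via \cite{Lu} and consulting Lawther's tables \cite{La}, one verifies that the only $p$-restricted weights other than $\om_1$ (always) and $\om_2$ (when $p=3$) for which ${\rm Jor}_V(u)$ could a priori have a single non-trivial block are eliminated; for instance for $p=3$ one must rule out $2\om_1$ and $\om_1+\om_2$, and for $p=5$ one must rule out $\om_2$, $2\om_1$, $\om_1+\om_2$, etc. The main obstacle is exactly this finite but non-trivial verification in the regular case for $p=5$, where the bound of $100$ admits several candidates each of whose Jordan structure under the regular unipotent must be read off from \cite{La} and seen to contain at least two non-trivial blocks. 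Combined with the Frobenius twist reduction, this yields $\omega=p^k\om_1$, or $p=3$ and $\omega=p^k\om_2$, as required.
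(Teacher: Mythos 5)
Your reductions are sound and essentially follow the paper: Frobenius twists to reduce to $p$-restricted $\omega$, Corollary \ref{c21} for tensor-indecomposability, Theorem \ref{id5} for $|u|=p$, Lemma \ref{lawther_blocks} to dispose of $\omega_1,\omega_2$, and Lemma \ref{a44} to bound $\dim V$ in the small-order cases (your use of the bound $100$ at $p=5$ to cut down the list of weights is even a slight economy the paper does not exploit, since it treats all $p$-restricted weights there directly). However, there is a genuine gap at exactly the point you yourself identify as ``the main obstacle'': for the regular unipotent element of order $p^2$ ($p=3,5$) you claim the surviving candidates ($2\omega_1$, $2\omega_2$ at $p=3$; $2\omega_1$, $2\omega_2$, $3\omega_2$, $\omega_1+\omega_2$, $2\omega_1+\omega_2$, etc.\ at $p=5$) can be eliminated by reading their Jordan structure off Lawther's tables \cite{La}. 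They cannot: Lawther computes Jordan block sizes only on the minimal and adjoint modules, i.e.\ for $G_2$ only on $V_{\omega_1}$ and on $V_{\omega_2}$ for $p\neq 3$ (this is stated explicitly in the paper just before Lemma \ref{lawther_blocks}). No published table gives the block structure of the regular element of order $25$ on, say, the $27$-dimensional module $V_{2\omega_2}$ or the $64$-dimensional $V_{\omega_1+\omega_2}$, so your ``finite but non-trivial verification'' is not actually carried out by the cited source, and it is precisely the hard core of the proposition.

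The paper closes this case by a different and substantial argument: for each candidate weight it takes a maximal parabolic $P_i$ with $A_1$-Levi $L_i'$, notes $u\notin R_u(P_i)$ so that the projection $u_i$ is a nontrivial unipotent element of $L_i'$, filters $V$ by the commutator spaces $[V,Q_i^d]$, identifies the $L_i'$-composition factors of the layers via \cite[2.3]{GS} together with weight multiplicities from \cite{luebeck} and \cite[1.35]{Te}, and then contradicts Lemma \ref{bb1}(C) (at most two non-trivial $A_1$-factors, with the larger highest weight at least $p$) or Lemma \ref{ae1} (tensor-decomposable factors such as the $10$-dimensional $V_{9\omega_1}$ arising inside $V_{4\omega_2}$). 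If you want to complete your proof you must either reproduce an argument of this kind for each surviving weight or supply some other explicit computation of the Jordan form of the order-$p^2$ regular element on these modules; citing \cite{La} does not do it.
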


\begin{proof} Suppose the contrary. By Corollary \ref{c21}, %it suffices to deal with the case where 
$V$ is tensor-indecomposable, % in particular,  
%In addition, 
so we may then assume that V is 
$p$-restricted. Note that $u$ is conjugate to a unipotent element of $G_2(p)$, and the restriction of a 
$p$-restricted \irr to $G_2(p)$ remains irreducible. So it suffices to deal with $G=G_2(p)$, which we 
assume in some cases below. 
By Lemma \ref{lawther_blocks}, we can assume that   $\om\neq \om_1,\om_2$. 

Let $p=2$. Then $|u|\leq 8$. Let $u_2$ be a power of $u$ such that $|u_2|=2$. 
 By Lemma \ref{a44}, we get  $\dim V\leq 20$. However, as   $\om$ is $2$-restricted and 
$\om\neq \om_1,\om_2$, we have $\om=\om_1+\om_2$. By \cite{Lu}, $\dim V=64 $ in this case, 
which is a 
contradiction.

 For elements of order $p$ for $p=3$, as in c) above, we similarly obtain the bound 
$\dim V\leq 10$, whereas the minimal dimension of an \irr of $G$ with highest weight 
$\om\neq 0,\om_1,\om_2$ exceeds
  26 \cite{luebeck}.  So we are left with the case $|u|=9$ for $p=3$ and $|u|=25$ for $p=5$. 
In these cases $u$ is a regular unipotent element of $G$, say $u=x_{-\alpha_1}(1)x_{-\alpha_2}(1)$.

Let $P_i\leq G$ be the parabolic subgroup with $P_i\supseteq B^-$, the Borel subgroup generated by the maximal 
torus $T$ and the 
root subgroups corresponding to negative roots, 
and whose Levi factor $L_i$ satisfies $L_i' = \langle
U_{\pm\alpha_i}\rangle$. Set $Q_i = R_u(P_i)$ and let
$\pi_i:P_i\to L_i$ be the canonical projection. Set $u_i=\pi_i(u)\in L_i'$.
 As $u\in P_i$,
$u$ stabilizes the commutator series $V\supset[V,Q_i]\supset [[V,Q_i], Q_i]\supset\cdots$, and acts on the 
quotients, via the element $u_i$. Then by Lemma \ref{tr8}, the matrix of $u$ in its 
action on every subquotient 
has at most one non-trivial Jordan block and, by Lemma \ref{bb1}(C), each of these subquotients can have at most 
one non-trivial
$p$-restricted \ir constituent. Note that we will abuse notation and write $\omega_i$ for the restriction of 
$\omega_i$ to $T\cap L_i'$.
Setting $[V,Q_i^0] = V$, let $[V,Q_i^d]=[[V,Q_i^{d-1}]Q_i]$ for $d\geq 1$; we will use the following result from 
\cite[2.3]{GS}:\medbreak

\noindent{\it{If $p=3$, assume $\omega=r\omega_1$, for some $r$.  Fix an integer $d\geq 0$.
Then the quotient $[V,Q_i^d]/[V,Q_i^{d+1}]$ is 
isomorphic 
to the direct sum of those weight spaces of $V$ of the form $\omega-d\alpha_j-m\alpha_i$, for some $m\geq 0$ and 
 where $\{i,j\}=\{1,2\}$.}}
\medbreak

\medskip
\noindent{\bf{Case $p=3$.}}  

Note that $V$ is tensor-indecomposable \ii $\om=2\om_1$ or $2\om_2$ (we ignore the cases $\om=\om_1,\om_2$ by the
 above), see \cite[Corollary of Theorem 41]{St}.

The class of regular elements is invariant under the graph
automorphism of $G$ and so the Jordan block structure of a
regular element on $V_{a\omega_1}$ is the same as the Jordan block
structure of this element on $V_{a\omega_2}$. So it suffices to deal with $\om=2\om_1$.
Using the above quoted result \cite[2.3]{GS}, we see that $[V,Q_1^1]/[V, Q_1^2]$   has three
$FL_1'$-composition factors, afforded by $\omega-\alpha_1-\alpha_2$ and $\omega-2\alpha_1-\alpha_2$, (the latter
weight has multiplicity 2, see \cite{luebeck}, and affords two composition factors) with highest weights 
$3\omega_1$, respectively $\omega_1$. 
This contradicts Lemma \ref{bb1}(G).

%So $\dim M=6>p+2$,
%and  Lemma \ref{bb1} yields a contradiction.  

 \medskip
\noindent{\bf {Case $p=5$.}}  

Throughout, we will refer to \cite{luebeck} for weight multiplicities, without further reference.

Consider first the modules $V = V_{a\omega_1}$, $a=2,3,4$. Here the $FL_1'$-module
$[V,Q_1]/[V,Q_1^2]$ has $FL_1$-composition
factors of highest weights $(a+1)\omega_1$ and $(a-1)\omega_1$, afforded by $\omega-\alpha_1-\alpha_2$, 
respectively $\omega_1-2\alpha_1-\alpha_2$. Then Lemma~\ref{bb1} implies that $a=4$. But in this case the second 
weight has multiplicity 2 and affords a third non-trivial composition factor, contradicting Lemma~\ref{bb1}(C).
%This contradicts Lemma \ref{bb1}.

Now turn to the modules whose highest weight is of the form
$b\omega_2$. For $V_{2\omega_2}$, the $FL_2'$-module
$[V,Q_2^3]/[V,Q_2^4]$ has composition
factors of highest weights $3\omega_2$, $\omega_2$ and $\omega_2$,
afforded by $\omega-3\alpha_1-\alpha_2$, respectively $\omega-3\alpha_1-2\alpha_2$, the latter having multiplicity 3 in $V$. This contradicts Lemma~\ref{bb1}(C).

 For $V_{3\omega_2}$, the
$FL_2'$-module $[V,Q_2^3]/[V,Q_2^4]$ has composition factors  of highest weights $4\omega_2$ and
$2\omega_2$, afforded by $\omega-3\alpha_1-\alpha_2$, respectively $\omega-3\alpha_1-2\alpha_2$, 
contradicting Lemma~\ref{bb1}(C).

Finally, for
the $FG$-module $V_{4\omega_2}$, we consider the action of the
parabolic subgroup $P_1$. The $FL_1'$-module $[V,Q_1^3]/[V,Q_1^4]$ has a
composition factor $R$ of dimension 10 whose highest weight is $9\omega_1$ (afforded by the weight 
$\omega-3\alpha_2$).
 Then $R$ is a tensor product of modules of dimensions 2 and 5,
 which contradicts Lemma \ref{ae1}.

We now turn to modules $V_{a\omega_1+b\omega_2}$, where
$0<a,b<5$. By \cite[1.35]{Te}, the weight $\omega-\alpha_1-\alpha_2$ has multiplicity 2 in $V$ if and only if
$(3b+a+3)\not\equiv 0\mod 5$. If $\omega-\alpha_1-\alpha_2$ has multiplicity 2, the $FL_i$-module 
$[V,Q_i]/[V,Q_i^2]$ has composition factors of highest weights $a+3$ and $a+1$, or $b+1$ and $b-1$, for $i=1$,
 respectively 2, afforded by $\omega-\alpha_j$ and $\omega-\alpha_j-\alpha_i$, where $\{i,j\}=\{1,2\}$. Now using
repeatedly Lemma~\ref{bb1} and Lemma~\ref{ae1}, we deduce that $\omega\in\{\omega_1+2\omega_2,
3\omega_1+b\omega_2 \ (b=1,3,4), 4\omega_1+\omega_2\}$. If $b>1$, the weight $\omega-2\alpha_2$ affords 
an $FL_1$-composition factor of $[V,Q_1^2]/[V,Q_1^3]$ which is tensor decomposable and contradicts Lemma~\ref{ae1}.
If $\omega=a\omega_1+\omega_2$, for $a=3,4$, then $\omega-3\alpha_1$ and $\omega-3\alpha_1-\alpha_2$ afford 
$FL_2$-composition factors of $[V,Q_2^3]/[V,Q_2^4]$, of highest weights $4\omega_2$, respectively $2\omega_2$, 
contradicting Lemma~\ref{bb1}(C).\end{proof}

This completes the consideration of the remaining cases for the group $G=G_2$ and together with Theorem~\ref{id5}
completes the proof of Theorem~\ref{mth1}.

\end{document}